\documentclass[11pt]{amsart}
\usepackage[utf8]{inputenc}
\usepackage{amsthm}
\usepackage{amsmath}
\usepackage{amssymb}
\usepackage{hyperref}
\usepackage{amsrefs}
\newtheorem{theorem}{Theorem}[section]

\newtheorem{lemma}[theorem]{Lemma}

\newtheorem{definition}[theorem]{Definition}

\newtheorem{remark}[theorem]{Remark}

\newtheorem{corollary}[theorem]{Corollary}
\newtheorem{proposition}[theorem]{Proposition}

\numberwithin{equation}{section}

\newcommand{\bb}[1]{\mathbb{#1}}
\newcommand{\cl}[1]{\mathcal{#1}}
\begin{document}
\title[]{Nearly invariant subspaces in the real Hardy space}

\author{Arshad Khan}
\address{Department Of Mathematics\\
         School of Natural Sciences\\
        Shiv Nadar Institution Of Eminence\\
        Gautam Buddha Nagar - 201314\\
         Uttar Pradesh, India}
\email{ak954@snu.edu.in}

\author{Sneh Lata}
\address{Department Of Mathematics\\
         School of Natural Sciences\\
         Shiv Nadar Institution Of Eminence\\
         Gautam Buddha Nagar - 201314\\
         Uttar Pradesh, India}
\email{sneh.lata@snu.edu.in}

\author{ Dinesh Singh}
\address{Centre For Digital Sciences\\
      O. P. Jindal Global University\\
   Sonipat\\
        Haryana 131001, India}
\email{dineshsingh1@gmail.com}

\subjclass{Primary 47B02; Secondary 47B38, 30H10}

\keywords{Real Hardy space, nearly invariant subspaces, invariant subspaces, backward shift operator, inner function, almost invariant subspaces.}

\begin{abstract}
The objective of this article is to study nearly invariant subspaces of the backward shift operator on the real Hardy space. We also investigate nearly invariant subspaces with finite defect, and as a consequence, provide a characterization for almost invariant subspaces of the backward shift operator.
\end{abstract}

\maketitle

\section{Introduction} \label{intro}
Subspaces exhibiting invariance under linear operators are of central importance. However, numerous naturally arising subspaces are not completely invariant and demonstrate certain weaker forms of invariance. These weaker notions have prompted several generalizations; such as nearly invariant subspaces and almost invariant subspaces. The study of nearly invariant subspaces has emerged as a vibrant and active area of research, attracting considerable interest from the mathematical community in recent years. Nearly invariant subspaces of the Hardy space $H^2(\mathbb{D})$ were first introduced and studied by Hitt \cite{Hit} in 1988, while characterizing shift invariant subspaces of the Hardy space over an annulus. A subspace $\cl M$ of the Hardy space $H^2(\bb D)$ is called nearly invariant under the backward shift operator $T^*_z$ if $T^*_z(f)$ belongs to 
$\cl M$ whenever $f\in \cl M$ and $f(0)=0$. In Hitt's formulation, these subspaces were referred to as \textit{weakly invariant subspaces}.  The terminology \textit{nearly invariant subspaces} was later popularized by Sarason  \cite{Sar}, who in 1988 provided a new proof of Hitt's theorem by utilizing ideas based on de Branges-Rovnyak spaces \cite{DBR}.

\begin{theorem}[Hitt's theorem] \cite[Theorem 1]{Sar} \label{ThH} Suppose $\mathcal{M}$ is a non-trivial subspace of $H^2(\mathbb{D})$ which is nearly invariant under the backward shift operator $T^*_z$, and let $g$ be the function of unit norm in $\mathcal{M}$ which is orthogonal to $\mathcal{M}\cap zH^2(\mathbb{D})$ and positive at the origin. Then there exists a $T^*_z$-invariant subspace $\mathcal{N}$ of $H^2(\mathbb{D})$ such that $\mathcal{M}=g\mathcal{N}$ and $\|gf\|=\|f\|$ for all $f \in \mathcal{N}$.
\end{theorem}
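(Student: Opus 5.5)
The plan is the standard iterative (Sarason-style) argument, which reconstructs each element of $\mathcal{M}$ as $g$ times a power series.

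First, existence of $g$. Since $\mathcal{M}$ is nontrivial and nearly invariant, not every function in $\mathcal{M}$ vanishes at $0$. Indeed, if $f\in\mathcal{M}$ had $f(0)=0$ for all $f$, then repeated application of $T^*_z$ would keep us in $\mathcal{M}$ with all Taylor coefficients zero, forcing $\mathcal{M}=\{0\}$. Hence $\mathcal{M}\cap zH^2(\mathbb{D})$ has codimension exactly one in $\mathcal{M}$, being the kernel of evaluation at $0$. Its orthogonal complement in $\mathcal{M}$ is therefore one-dimensional, so a unique unit vector $g$ in it with $g(0)>0$ exists.

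Next, the expansion. Fix $f\in\mathcal{M}$ and write $f=a_0g+f_1$ with $f_1\in\mathcal{M}\cap zH^2$; then $a_0=\langle f,g\rangle$. Near invariance gives $T^*_zf_1=f_1/z\in\mathcal{M}$, so iterating yields
\[ f=g(a_0+a_1z+\cdots+a_nz^n)+z^{n+1}h_{n+1},\qquad h_{n+1}\in\mathcal{M}. \]
Orthogonality at each stage gives $\|f\|^2=\sum_{k\le n}|a_k|^2+\|h_{n+1}\|^2$, because $\|z^{k}h\|=\|h\|$ and the decomposition at each step is orthogonal. Hence $\sum|a_k|^2\le\|f\|^2$, and $F=\sum a_kz^k\in H^2$.

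The main obstacle is showing that $f=gF$ and that equality $\|F\|=\|f\|$ holds, i.e.\ that $\|h_{n}\|\to0$. For pointwise identification, note that $z^{n+1}h_{n+1}\to0$ pointwise on $\mathbb{D}$, since $|h(\lambda)|\le\|h\|(1-|\lambda|^2)^{-1/2}$ and $\|h_{n+1}\|\le\|f\|$. Hence $f=gF$ on $\mathbb{D}$. For the norm, I would use the operator $V:f\mapsto F$, which is a contraction from $\mathcal{M}$ into $H^2$ with $V(f_1/z)=T^*_zVf$-type intertwining. The kernel of $V$ is zero, because $gF=f$ with $F=0$ gives $f=0$. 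To get isometry, I would follow Sarason: the norm defect $\lim\|h_n\|^2=\|f\|^2-\|F\|^2$ defines a positive form $D(f)$ satisfying $D(f)=D(f_1/z)$. One then shows that the map $f\mapsto(h_n)$ produces vectors $h_n=T^{*n}_z$-type shifts of $f$ whose weak limits are in $\mathcal{M}$ with $h_n(0)\to0$ and $\langle h_n,g\rangle=a_n\to0$, and that weak limits $h$ satisfy $h=\lim h_{n}$ with $h(0)$-coefficient zero at every stage, forcing $h=0$ by the nontriviality argument above. Combining this with the Sarason/de Branges--Rovnyak argument, the contraction $\mathcal{M}\ni f\mapsto F$ is shown to be isometric.

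Finally, set $\mathcal{N}=\{F: gF\in\mathcal{M}\}=V\mathcal{M}$. This is closed, being the isometric image of a closed space. It is $T^*_z$-invariant, since for $f=gF$ we have $f-F(0)g=f_1\in\mathcal{M}\cap zH^2$ and $f_1/z=g\,T^*_zF$, so $T^*_zF\in\mathcal{N}$. Hence $\mathcal{M}=g\mathcal{N}$ with $\|gF\|=\|F\|$.
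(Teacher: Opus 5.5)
The paper gives no proof of this statement. It quotes it from Sarason and uses it as a black box, so your proposal has to stand on its own. Your first part is fine: existence and uniqueness of $g$, the orthogonal expansion $f=g(a_0+\cdots+a_nz^n)+z^{n+1}h_{n+1}$ with $\|f\|^2=\sum_{k\le n}|a_k|^2+\|h_{n+1}\|^2$, the pointwise identity $f=gF$, injectivity of $f\mapsto F$, and $T_z^*$-invariance of $\{F: gF\in\mathcal{M}\}$.

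\textbf{The gap.} It sits exactly at the step you flag: the isometry $\|gF\|=\|F\|$, i.e.\ $\|h_n\|\to 0$. This is the real content of Hitt's theorem.
\begin{itemize}
\item What you sketch only shows $h_n\to 0$ \emph{weakly}. The phrase ``$h=\lim h_n$'' treats a weak limit as a norm limit, and weak convergence together with $\|h_n\|\downarrow\delta$ says nothing about $\delta$.
\item The appeal to ``the Sarason/de Branges--Rovnyak argument'' is therefore the missing proof, not a supplement to it.
\item Every property of $\mathcal{M}$ your argument uses also holds in a de Branges--Rovnyak space $\mathcal{H}(b)$, with $b$ an extreme point of the unit ball of $H^\infty$ that is not inner, $b(0)=0$, and $g=1$. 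Those properties are: contractive containment in $H^2$, the splitting $\mathcal{M}=\mathbb{C}g\oplus(\mathcal{M}\cap zH^2)$, and $\|T_z^*f\|=\|f\|$ when $f(0)=0$.
\item In that space your map is the identity, yet $f=T_z^*b$ has $\|f\|_{\mathcal{H}(b)}=1>\|f\|_{H^2}$. So any correct proof must use that the norm of $\mathcal{M}$ is the $L^2(\mathbb{T})$ norm.
\item Since $h_n=g\,T_z^{*n}F$, the step is trivial when $g\in H^\infty$. The whole difficulty is unbounded $g$.
\item Your closedness argument for $\mathcal{N}$ also rests on this isometry.
\end{itemize}

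\textbf{One way to close the gap.} Give $\mathcal{N}=\{F: gF\in\mathcal{M}\}$ the norm $\|F\|_{\mathcal{N}}=\|gF\|$. By your expansion, $\mathcal{N}$ is a reproducing kernel Hilbert space contractively contained in $H^2$, with
\[
\langle F,1\rangle_{\mathcal{N}}=F(0),\qquad \|1\|_{\mathcal{N}}=1,\qquad \|T_z^*F\|_{\mathcal{N}}^2=\|F\|_{\mathcal{N}}^2-|F(0)|^2 .
\]
Hence $X=T_z^*|_{\mathcal{N}}$ is a partial isometry with initial space $\{F\in\mathcal{N}: F(0)=0\}$. Let $k_\lambda$ be the kernel of $\mathcal{N}$. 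Using $X^*k_\lambda=(k_\lambda-1)/\bar\lambda$ and $XX^*=P_{X\mathcal{N}}$, one finds
\[
(1-\bar\lambda w)\,k_\lambda(w)=1-\sum_j b_j(w)\overline{b_j(\lambda)},\qquad b_j=z e_j ,
\]
where $(e_j)$ is an orthonormal basis of $\mathcal{N}\ominus X\mathcal{N}$. Hence $\sum_j|b_j|^2\le 1$ on $\mathbb{D}$, and a.e.\ on $\mathbb{T}$.

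Now use $|z|=1$ on $\mathbb{T}$, which is exactly where the $L^2(\mathbb{T})$ structure enters: $1=\|e_j\|_{\mathcal{N}}^2=\int_{\mathbb{T}}|g|^2|b_j|^2\,dm$. Summing over $j$, the number of $e_j$'s is at most $\int_{\mathbb{T}}|g|^2\,dm=1$.
\begin{itemize}
\item If there is no $e_j$, then $k_\lambda(w)=(1-\bar\lambda w)^{-1}$ and $\mathcal{N}=H^2$ isometrically.
\item If there is one, then $\int_{\mathbb{T}}|g|^2(1-|b_1|^2)\,dm=0$. Since $g\neq0$ a.e., this forces $|b_1|=1$ a.e., so $b_1$ is inner. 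Then $k_\lambda$ is the kernel of $H^2\ominus b_1H^2$, and $\mathcal{N}=H^2\ominus b_1H^2$ with equal norms.
\end{itemize}
This gives the isometry, closedness and $T_z^*$-invariance at once.
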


The well-known examples of nearly $T^*_z$-invariant subspaces are kernel subspaces of Toeplitz operators on $H^2(\mathbb{D})$. Such kernel subspaces were also studied and described independently by Hayashi \cite{Hay} in 1986. Since the time \cite{Hay}, \cite{Hit}, and particularly \cite{Sar} appeared, nearly invariant subspaces have established themselves as an important area of research, and they can be deemed to be a proper generalization of the concept of invariant subspaces. For more insights into 
nearly invariant subspaces, one can refer to \cite{CCP}, \cite{CGP1}, \cite{CD}, \cite{CDP1}, \cite{DS}, \cite{Era}, \cite{KLD}, \cite{KLS1}, \cite{LP}, and the references therein. 

 In 2020, Chalendar, Gallardo-Guti$\rm{\acute{e}}$rrez, and Partington \cite{CGP1} introduced and studied the concept of nearly invariant subspaces with finite defect of the backward shift operator on $H^2(\mathbb{D})$. So far, lots of studies have been done on nearly invariant subspaces in Hardy spaces and related Hilbert spaces, but there appears to be no prior work addressing the study of nearly invariant subspaces in real Hardy spaces. Motivated by this gap in research, the present paper undertakes a systematic study and investigation of nearly invariant subspaces in the real Hardy space. 

 The real Hardy space $H^2_\mathbb{R}(\mathbb{D})$ consists of functions in $H^2(\mathbb{D})$ with real Fourier coefficients. It is a Hilbert space over the field $\mathbb{R}$. If $H^2(\mathbb{D})$ is viewed as a Hilbert space over the field $\mathbb{R}$, then $H^2_\mathbb{R}(\mathbb{D})$ is a closed subspace of $H^2(\mathbb{D})$. The study of real Hardy and Lebesgue spaces has drawn interest in the past few decades. For instance, in 1987, Mehanna and Singh \cite{MS} investigated invariant subspaces of the real $H^1(\mathbb{D})$ space. In 2006, Jupiter and Redett \cite{JR} studied shift invariant subspaces in the real $L^1(\mathbb{T})$ space.  In a broader context, Raghupati and Singh \cite{RS} characterized shift invariant subspaces of real $L^p(\mathbb{T})$ spaces in 2011. In 2007, Mikkola and Sasane \cite{Tol} proved Tolokonnikov’s Lemma and the inner-outer factorization in the algebra of bounded analytic functions (possibly operator-valued) on the open unit disc all of whose matrix-entries (with respect to some fixed orthonormal basis) are functions having real Fourier coefficients. A foundational result in the study of real function algebras was established by Wick \cite{BDW} in 2009, where he investigated stabilization in the real disc algebra $A_\mathbb{R}(\mathbb{D})$. He extended this result in 2010, where he addressed the problem in $H_\mathbb{R}^\infty (\mathbb{D})$ (see \cite{BDW1}).

 The organization of the paper is as follows. Section \ref{PRE} contains some essential terminologies and definitions that are required for the understanding of the paper.  It also includes some technical lemmas that will be used throughout the paper (Lemma \ref{Lem1}, Lemma  \ref{Lem2}, and Lemma \ref{Lem3}). In Section \ref{NIS},  we investigate nearly invariant subspaces in the real Hardy space (Theorem \ref{ThM1}). Section \ref{NISF} focuses on nearly invariant subspaces with finite defect of the backward shift operator on the real Hardy space (Theorem \ref{MT2} and Theorem \ref{MT3}). As a consequence, we provide a classification for almost invariant subspaces of the backward shift operator on the real Hardy space (Corollary \ref{CorA}).

\section{Preliminaries} \label{PRE}
Suppose $\mathbb{D}$ denotes the open unit disc in the complex plane $\mathbb{C}$. Recall that, the $\mathbb{C}^m$-valued Hardy space, denoted by $H^2(\mathbb{D},\mathbb{C}^m)$, is defined as 
\begin{eqnarray*} 
 H^2(\mathbb{D},\mathbb{C}^m):=\Bigg\{F(z)=\sum_{n=0}^{\infty}A_nz^n: A_n \in \mathbb{C}^m, \sum_{n=0}^{\infty}\|A_n\|_{\mathbb{C}^m}^2 < \infty \Bigg\}.   
\end{eqnarray*}
It is a Hilbert space over the field $\mathbb{C}$ with respect to the inner product defined by
\begin{eqnarray}\label{e2.1}
\langle{F,H}\rangle=\sum_{n=0}^{\infty}\langle{A_n, B_n}\rangle_{\mathbb{C}^m},
\end{eqnarray}  
where
$F(z)=\sum_{n=0}^{\infty}A_n z^n $ and $H(z)=
 \sum_{n=0}^{\infty}B_n z^n$ are in $H^2(\mathbb{D},\mathbb{C}^m)$, and $\langle \cdot,\cdot \rangle_{\mathbb{C}^m}$  denotes the standard inner product on $\mathbb{C}^m$. For $m=1$, we denote $H^2(\mathbb{D}, \mathbb{C})$ simply by $H^2(\mathbb{D})$.
 
 \begin{definition}
   The $\mathbb{C}^m$-valued real Hardy space, denoted by $H^2_{\mathbb{R}}(\mathbb{D},\mathbb{C}^m)$, consists of all functions in $H^2(\mathbb{D},\mathbb{C}^m)$ having Fourier coefficients from $\mathbb{R}^m$ in their power series representations. That is,
\begin{eqnarray*} 
 H^2_{\mathbb{R}}(\mathbb{D},\mathbb{C}^m):=\Bigg\{F(z)=\sum_{n=0}^{\infty}A_nz^n \in H^2(\mathbb{D},\mathbb{C}^m) : A_n \in \mathbb{R}^m\Bigg\}.   
\end{eqnarray*}
It is a Hilbert space over the field $\mathbb{R}$ with respect to the inner product defined in (\ref{e2.1}), where
$F(z)=\sum_{n=0}^{\infty}A_n z^n $ and $H(z)=
 \sum_{n=0}^{\infty}B_n z^n$ are in $H^2_\mathbb{R}(\mathbb{D},\mathbb{C}^m)$. 
\end{definition}
 
The real Hardy space $H^2_\mathbb{R}(\bb D,\bb C)$ 
is denoted simply as $H^2_\mathbb{R}(\bb D)$. 
If $H^2(\mathbb{D},\mathbb{C}^m)$ is viewed as a vector space over the field $\mathbb{R}$, then it is a Hilbert space with respect to the inner product defined in (\ref{e2.1}).

\begin{definition}
 The \textit{shift operator} on $H^2_{\mathbb{R}}(\mathbb{D},\mathbb{C}^m)$ , denoted by $T_z$, is defined as 
\begin{eqnarray*}
    (T_zF)(z)=zF(z), \text{ }F\in H^2_{\mathbb{R}}(\mathbb{D},\mathbb{C}^m),
\end{eqnarray*}
and its adjoint, denoted by $T^*_z$, called the \textit{backward shift operator}, is given by
\begin{eqnarray*}
    (T^*_zF)(z)=\frac{F(z)-F(0)}{z}, \text{ } F\in H^2_{\mathbb{R}}(\mathbb{D},\mathbb{C}^m).
\end{eqnarray*}   
\end{definition}
 
Throughout this article, we use the term \textit{subspace }for a closed subspace. Suppose $\mathcal{M}$ is a vector space over the field $\mathbb{R}$, then we define a set $\mathcal{M}_\mathbb{C}$ as
\begin{eqnarray*}
 \mathcal{M}_\mathbb{C}:=\big\{f + ih : f,h \in \mathcal{M}\big\}.  
\end{eqnarray*}
 It is a vector space over the field $\mathbb{C}$, called the \textit{complexification of $\mathcal{M}$}. For a vector $A=\begin{pmatrix} a_1 \\ \vdots \\ a_m\end{pmatrix}$ in $\mathbb{C}^m$, we define $\overline{A}:=\begin{pmatrix} \overline{a_1} \\ \vdots \\ \overline{a_m}\end{pmatrix}$.
  For each $F$ in $H^2(\mathbb{D},\mathbb{C}^m)$, we define a function $\widehat{F}: \mathbb{D} \longrightarrow \mathbb{C}^m$ by
\begin{eqnarray*}
    \widehat{F} (z):=\overline{F(\overline{z})}.
\end{eqnarray*}

If $F(z)=\sum_{n=0}^{\infty}A_nz^n \in H^2(\mathbb{D},\mathbb{C}^m)$, then
$
\widehat{F}(z)=\sum_{n=0}^{\infty}\overline{A_n}z^n$. It is easy to see that $\widehat{F} \in H^2(\mathbb{D},\mathbb{C}^m)$.

Given a subspace $\mathcal{N}$ of $H^2(\mathbb{D},\mathbb{C}^m)$, we define $\widehat{\mathcal{N}}:=\{\widehat{F} : F \in \mathcal{N}\}$. It is straightforward to check that $\widehat{N}$ is also a subspace of $H^2(\mathbb{D},\mathbb{C}^m)$. The Hardy spaces $H^2(\mathbb{D}, \mathbb{C}^{m})$ and $H^2(\mathbb{D}) \oplus \cdots \oplus H^2(\mathbb{D})$ ($m$ times), the orthogonal sum of $m$ copies of $H^2(\mathbb{D})$, are unitarily equivalent. The unitary is given by the map 
\begin{center}

$\begin{pmatrix}
a_1 \\ 
a_2\\ 
\vdots \\
a_m
\end{pmatrix}z^n
\mapsto 
\begin{pmatrix}
 a_1 z^n\\ 
a_2 z^n\\
\vdots \\
a_m z^n
\end{pmatrix},$
\end{center}
where $a_1, \dots , a_m$ are complex scalars.
Thus, each $F\in H^2(\mathbb{D},\mathbb{C}^m)$ can be identified as an $m$-tuple $(f_1, \dots , f_m)$, where $f_1, \dots , f_m$ are in $H^2(\mathbb{D})$. Also, $\widehat{F}$ is identified as $(\widehat{f_1}, \dots , \widehat{f_m})$.

\begin{definition}
The function $\phi : H^2(\mathbb{D},\mathbb{C}^m) \longrightarrow H^2(\mathbb{D},\mathbb{C}^m)$ defined by 
\begin{eqnarray*}
    \phi(F)=\frac{F + \widehat{F}}{2}, \text{ } F\in H^2(\mathbb{D},\mathbb{C}^m), 
\end{eqnarray*} 
is called symmetrisation.
\end{definition}

We observe that $\phi$ maps $H^2(\mathbb{D},\mathbb{C}^m)$ onto $H^2_{\mathbb{R}}(\mathbb{D},\mathbb{C}^m)$. The map $\phi$ is a contraction and additive, but not linear. Every function $F$ in $H^2(\mathbb{D},\mathbb{C}^m)$ can be uniquely expressed as $F=F_1 + iF_2$, where $F_1$, $F_2$ are in  $H^2_{\mathbb{R}}(\mathbb{D},\mathbb{C}^m)$. This is easily understood from the following: 
\begin{eqnarray*}
    F=\frac{F + \widehat{F}}{2} + i\bigg(\frac{F - \widehat{F}}{2i}\bigg)=\phi(F) + (F - \phi(F)).
\end{eqnarray*}
For each $F=(f_1, \dots , f_m) \in H^2(\mathbb{D},\mathbb{C}^m)$, $\phi(F)$ equals to $\big(\phi(f_1), \dots , \phi(f_m)\big)$. For a set $\mathcal{K}\subset  H^2(\mathbb{D},\mathbb{C}^m)$, we define $\phi(\mathcal{K}):=\{\phi(F): F \in \mathcal{K}\}$.
We end this section by providing some essential facts, which will be useful in the subsequent sections.

\begin{lemma}\label{Lem1}
Suppose $f$ and $h$ are two functions in $H^2(\mathbb{D})$. Then $$\langle f, h \rangle = \langle \widehat{h}, \widehat{f} \rangle.$$   
\end{lemma}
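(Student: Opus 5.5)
The identity follows from a direct computation with Taylor coefficients, using the coefficient form of the inner product \eqref{e2.1} with $m=1$. Write $f(z)=\sum_{n=0}^{\infty}a_nz^n$ and $h(z)=\sum_{n=0}^{\infty}b_nz^n$ in $H^2(\mathbb{D})$. As observed just before the definition of symmetrisation, $\widehat{f}(z)=\sum_{n=0}^{\infty}\overline{a_n}z^n$ and $\widehat{h}(z)=\sum_{n=0}^{\infty}\overline{b_n}z^n$, and both lie in $H^2(\mathbb{D})$ because the coefficient moduli are unchanged.

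Apply \eqref{e2.1} to the two sides. The left side is
\begin{eqnarray*}
\langle f,h\rangle=\sum_{n=0}^{\infty}a_n\overline{b_n}.
\end{eqnarray*}
The right side is
\begin{eqnarray*}
\langle \widehat{h},\widehat{f}\rangle=\sum_{n=0}^{\infty}\overline{b_n}\,\overline{\overline{a_n}}=\sum_{n=0}^{\infty}\overline{b_n}\,a_n .
\end{eqnarray*}
By commutativity of complex multiplication, the two series agree term by term. Both converge absolutely by the Cauchy--Schwarz inequality, since $\sum|a_n|^2<\infty$ and $\sum|b_n|^2<\infty$, so their sums are equal.

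Since $\langle \widehat{h},\widehat{f}\rangle=\overline{\langle \widehat{f},\widehat{h}\rangle}$, the lemma equivalently says that the map $F\mapsto\widehat{F}$ is a conjugate-linear isometry that conjugates inner products. There is no real obstacle here. The only points needing care are that the convention in \eqref{e2.1} conjugates the second argument, and the convergence justification given above.
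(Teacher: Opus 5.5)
Your proof is correct and takes essentially the paper's approach: expand $f$ and $h$ in Taylor coefficients and use the coefficient form of the inner product to match $\sum a_n\overline{b_n}$ with $\sum \overline{b_n}\,a_n$ term by term. Your extra remarks on absolute convergence and on $F\mapsto\widehat{F}$ being a conjugate-linear isometry are accurate but go beyond what the lemma needs.
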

\begin{proof}
Suppose $f(z)=\sum_{n=0}^{\infty}a_{n}z^n$ and $h(z)=\sum_{n=0}^{\infty}b_{n}z^n$ are in $H^2(\mathbb{D})$. Then $$\langle f, h \rangle =\sum_{n=0}^{\infty} \langle a_n, b_n \rangle_{\mathbb{C}}=\sum_{n=0}^{\infty}\langle\overline{b_n},\text{ }\overline{a_n}\rangle_{\mathbb{C}}=\langle \widehat{h}, \widehat{f} \rangle.$$
\end{proof}

\begin{lemma}\label{Lem2}
Suppose $\psi$ is an inner function in $H^2(\mathbb{D})$, then $$\widehat{H^2(\mathbb{D})\ominus \psi
H^2(\mathbb{D})}=H^2(\mathbb{D}) \ominus \widehat{\psi} H^2(\mathbb{D}).$$
\end{lemma}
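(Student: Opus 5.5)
The plan is to show that the map $F\mapsto\widehat{F}$ respects the two operations involved: it carries products to products and turns orthogonality into orthogonality. Then orthogonal complements are transported correctly.

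First, for $f,h\in H^2(\mathbb{D})$ we have $\langle f,h\rangle=0$ if and only if $\langle \widehat{f},\widehat{h}\rangle=0$. Indeed, Lemma \ref{Lem1} gives $\langle \widehat{f},\widehat{h}\rangle=\langle h,f\rangle=\overline{\langle f,h\rangle}$, after applying it with $\widehat{\widehat{f}}=f$.

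Next, for $g\in H^2(\mathbb{D})$ we have $\widehat{\psi g}(z)=\overline{\psi(\overline z)}\,\overline{g(\overline z)}=\widehat{\psi}(z)\widehat{g}(z)$. Hence $\widehat{\psi H^2(\mathbb{D})}=\widehat{\psi}\,\widehat{H^2(\mathbb{D})}=\widehat{\psi}H^2(\mathbb{D})$. Here I use that $F\mapsto\widehat F$ is a bijection of $H^2(\mathbb{D})$ onto itself, being its own inverse.

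I also record that $\widehat{\psi}$ is inner, so that the right-hand side of the statement is a genuine model space. Since $\widehat\psi\in H^\infty$, its boundary values are $\widehat{\psi}(e^{i\theta})=\overline{\psi(e^{-i\theta})}$ almost everywhere. These are obtained as radial limits: $\widehat\psi(re^{i\theta})=\overline{\psi(re^{-i\theta})}$, so the limit exists wherever $\psi$ has a radial limit at $e^{-i\theta}$, which is almost everywhere. Therefore $|\widehat\psi|=1$ a.e. on $\mathbb{T}$.

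Now take $f\in H^2(\mathbb{D})\ominus\psi H^2(\mathbb{D})$. For any $h\in H^2(\mathbb{D})$, write $\widehat{\psi}h=\widehat{\psi\widehat h}$. Then
\[
\langle \widehat f,\widehat\psi h\rangle=\overline{\langle f,\psi\widehat h\rangle}=0,
\]
so $\widehat f\perp\widehat\psi H^2(\mathbb{D})$. This proves the inclusion $\subseteq$.

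Conversely, if $g\in H^2(\mathbb{D})\ominus\widehat\psi H^2(\mathbb{D})$, apply the same argument with $\widehat\psi$ in place of $\psi$, using $\widehat{\widehat\psi}=\psi$. This shows $\widehat g\in H^2(\mathbb{D})\ominus\psi H^2(\mathbb{D})$, and so $g=\widehat{\widehat g}$ lies in the left-hand side.

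There is no serious obstacle. The only points needing care are the conjugate-symmetry of the inner product, which Lemma \ref{Lem1} supplies, and the multiplicativity $\widehat{\psi g}=\widehat\psi\,\widehat g$. Both are routine coefficientwise or pointwise checks.
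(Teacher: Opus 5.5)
Your proposal is correct and takes essentially the same approach as the paper. Both transfer orthogonality via Lemma~\ref{Lem1} together with the multiplicativity $\widehat{\psi k}=\widehat{\psi}\,\widehat{k}$ to get one inclusion, and obtain the reverse inclusion by the same argument with $\widehat{\psi}$ in place of $\psi$. Your radial-limit check that $\widehat{\psi}$ is inner simply fills in a step the paper only asserts, and the set equality itself does not actually need it.
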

\begin{proof}
First, we observe that $\psi$ is inner implies $\widehat{\psi}$ is inner.
Now, suppose $f\in \widehat{H^2(\mathbb{D})\ominus \psi
H^2(\mathbb{D})}$, then $f=\widehat{h}$ for some $h \in H^2(\mathbb{D})\ominus \psi H^2(\mathbb{D})$. Then for any $k \in H^2(\mathbb{D})$, we have $\widehat{f}\in H^2(\mathbb{D})$ and
\begin{eqnarray*}
\langle f, \widehat{\psi} k \rangle=
\langle \widehat{h}, \widehat{\psi} k \rangle= \langle \psi \widehat{k}, h \rangle=0 
\end{eqnarray*}
This shows that $\widehat{H^2(\mathbb{D})\ominus \psi
H^2(\mathbb{D})} \subset H^2(\mathbb{D}) \ominus \widehat{\psi} H^2(\mathbb{D})$.

 Conversely, suppose $f \in H^2(\mathbb{D}) \ominus \widehat{\psi} H^2(\mathbb{D})$, then
\begin{eqnarray*}
\langle \widehat{f}, \psi h \rangle=\langle \widehat{\psi} \widehat{h}, f \rangle=0    \text{  }   \text{ for any } h \in H^2(\mathbb{D}),  
\end{eqnarray*}
which shows that $f\in \widehat{H^2(\mathbb{D})\ominus \psi H^2(\mathbb{D})}$.

Thus $$H^2(\mathbb{D}) \ominus \widehat{\psi} H^2(\mathbb{D})\subset \widehat{H^2(\mathbb{D})\ominus \psi H^2(\mathbb{D})}.$$ This completes the proof.
\end{proof}

\begin{lemma}\label{Lem3}
Suppose $\mathcal{N}$ is a non-zero subspace of $H^2(\mathbb{D})$. Then $$H^2_{\mathbb{R}}(\mathbb{D}) \ominus \phi(\mathcal{N})\subset\phi(H^2(\mathbb{D})\ominus \mathcal{N}).$$
Moreover, if $\widehat{\mathcal{N}}\subset \mathcal{N}$, then equality holds.
\end{lemma}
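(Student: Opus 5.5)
The plan is to reduce both inclusions to a single computation of $\langle f, \phi(g)\rangle$ when $f$ is real, using Lemma \ref{Lem1}. The key observation is that a function $f\in H^2(\mathbb{D})$ lies in $H^2_{\mathbb{R}}(\mathbb{D})$ exactly when $\widehat{f}=f$. In that case $\phi(f)=f$.

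\textbf{Step 1 (the basic identity).} Let $f\in H^2_{\mathbb{R}}(\mathbb{D})$ and $g\in H^2(\mathbb{D})$. By definition of $\phi$,
\begin{eqnarray*}
\langle f,\phi(g)\rangle=\tfrac12\big(\langle f,g\rangle+\langle f,\widehat{g}\rangle\big).
\end{eqnarray*}
Lemma \ref{Lem1} gives $\langle f,\widehat{g}\rangle=\langle g,\widehat{f}\rangle=\langle g,f\rangle=\overline{\langle f,g\rangle}$. Hence
\begin{eqnarray*}
\langle f,\phi(g)\rangle=\operatorname{Re}\langle f,g\rangle .
\end{eqnarray*}
This is consistent with the inner product on $H^2_{\mathbb{R}}(\mathbb{D})$ being real-valued.

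\textbf{Step 2 (the inclusion).} Take $f\in H^2_{\mathbb{R}}(\mathbb{D})\ominus\phi(\mathcal{N})$. Then $\operatorname{Re}\langle f,g\rangle=0$ for every $g\in\mathcal{N}$. Since $\mathcal{N}$ is a complex subspace, $ig\in\mathcal{N}$ as well. Moreover $\operatorname{Re}\langle f,ig\rangle=\operatorname{Re}\big(-i\langle f,g\rangle\big)=\operatorname{Im}\langle f,g\rangle$. Therefore $\langle f,g\rangle=0$ for all $g\in\mathcal{N}$, so $f\in H^2(\mathbb{D})\ominus\mathcal{N}$. Because $f$ is real, $f=\phi(f)\in\phi(H^2(\mathbb{D})\ominus\mathcal{N})$.

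\textbf{Step 3 (equality when $\widehat{\mathcal{N}}\subset\mathcal{N}$).} Let $h\in H^2(\mathbb{D})\ominus\mathcal{N}$ and $g\in\mathcal{N}$. Since $\phi(h)$ is real, Step 1 gives $\langle\phi(h),\phi(g)\rangle=\operatorname{Re}\langle\phi(h),g\rangle$. Expanding $\phi(h)$,
\begin{eqnarray*}
\langle\phi(h),g\rangle=\tfrac12\big(\langle h,g\rangle+\langle\widehat{h},g\rangle\big).
\end{eqnarray*}
The first term vanishes because $h\perp\mathcal{N}$. By Lemma \ref{Lem1}, the second term equals $\langle\widehat{g},h\rangle$, which is $0$ because $\widehat{g}\in\mathcal{N}$. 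Thus $\phi(h)\in H^2_{\mathbb{R}}(\mathbb{D})\ominus\phi(\mathcal{N})$, which gives the reverse inclusion.

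\textbf{Main obstacle.} The only delicate point is Step 2. Orthogonality to $\phi(\mathcal{N})$ only controls the real part $\operatorname{Re}\langle f,g\rangle$. Recovering the full complex orthogonality $\langle f,g\rangle=0$ relies on $\mathcal{N}$ being closed under multiplication by $i$, that is, on $\mathcal{N}$ being a complex subspace, and this has to be invoked explicitly. The hypothesis that $\mathcal{N}$ is non-zero plays no essential role in the argument.
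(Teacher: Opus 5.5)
Your proof is correct and follows the paper's outline. Both inclusions reduce to orthogonality computations via Lemma~\ref{Lem1}, and your Step 3 is a condensed version of the paper's four-term expansion using $\widehat{\mathcal{N}}\subset\mathcal{N}$. The one local difference is in the first inclusion: you use $i\mathcal{N}\subset\mathcal{N}$ to upgrade $\operatorname{Re}\langle f,g\rangle=0$ to $\langle f,g\rangle=0$. The paper instead writes $f=f_1+f_2$ with $f_1\in\mathcal{N}$, $f_2\perp\mathcal{N}$, and tests only against $g=f_1$; since $\langle f,f_1\rangle=\|f_1\|^2$ is already real, the vanishing real part forces $f_1=0$.
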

\begin{proof}
Suppose $f \in H^2_\mathbb{R}(\mathbb{D}) \ominus \phi(\mathcal{N})$. Then $$\langle f, \phi(h) \rangle =0  \text{ 
 } \text{for any } h \in \mathcal{N}.$$ 
This implies that for any $h \in \mathcal{N}$,
\begin{eqnarray*}
\langle f, h \rangle &=& -\langle f, \widehat{h} \rangle\\
&=& -\langle h, \widehat{f} \rangle \ \ \  \text{(by Lemma \ref{Lem1})}\\
&=& -\langle h, f \rangle  \ \ \  (\text{since} \ f =\widehat{f}).  
\end{eqnarray*}
Now, consider the following orthogonal decomposition
$$H^2(\mathbb{D})= \mathcal{N} \oplus (H^2(\mathbb{D})\ominus \mathcal{N}).$$
Then, $f$ can be expressed as $f=f_1 + f_2$, where $f_1 \in \mathcal{N}$ and $f_2 \in H^2(\mathbb{D})\ominus \mathcal{N}$. Then
\begin{eqnarray*}
\langle f_1, f_1 \rangle&=&\langle f_1 + f_2, f_1 \rangle\\
&=&\langle f, f_1 \rangle\\
&=&-\langle f_1, f \rangle\\
&=&-\langle f_1, f_1 \rangle ,   
\end{eqnarray*}
which shows that $\|f_1\|=0$; that is, $f_1=0$. Therefore $$f=\phi(f)=\phi(f_2),$$
which belongs to $\phi(H^2(\mathbb{D})\ominus \mathcal{N})$.

 Further, if $\widehat{\mathcal{N}}\subset \mathcal{N}$. Choose $f \in \phi(H^2(\mathbb{D})\ominus \mathcal{N})$. Then $f=\phi(k)$, for some $k \in H^2(\mathbb{D})\ominus \mathcal{N}$.
Now, for any $\phi(h) \in \phi(\mathcal{N})$,
\begin{eqnarray*}
\langle f, \phi(h) \rangle &=& \langle \phi(k), \phi(h) \rangle\\
&=& \frac{1}{4}\bigg(\langle k,h \rangle +  \langle \widehat{k},h \rangle +  \langle k, \widehat{h} \rangle + \langle \widehat{k},\widehat{h} \rangle \bigg)\\
&=& \frac{1}{4}\bigg(\langle k,h \rangle +  \langle \widehat{h},k \rangle +  \langle k, \widehat{h} \rangle + \langle h,k \rangle \bigg) \quad \text{(By Lemma \ref{Lem1})}\\
&= & 0  \quad\text{(since $\widehat{\mathcal{N}}\subset \mathcal{N}$)}
\end{eqnarray*}
which shows that $\phi(H^2(\mathbb{D})\ominus \mathcal{N}) \subset H^2_{\mathbb{R}}(\mathbb{D})
\ominus \phi(\mathcal{N})$. This completes the proof.
\end{proof}

\section{Nearly invariant subspaces} \label{NIS}
In this section, we shall investigate the description of nearly invariant subspaces of the backward shift operator on the real Hardy space $H^2_\mathbb{R}(\mathbb{D})$.

\begin{definition}\label{defN}
 A subspace $\mathcal{M}$ of $H^2_{\mathbb{R}}(\mathbb{D})$ is called nearly invariant under the backward shift operator $T^*_z$ if $T^*_zf \in \mathcal{M}$, whenever $f \in \mathcal{M}$ and $f(0)=0$. In other words,
 \begin{eqnarray*}
     T^*_z(\mathcal{M}\cap zH^2_{\mathbb{R}}(\mathbb{D}))\subset \mathcal{M}.
 \end{eqnarray*}
 \end{definition}

Before we proceed to the main result of this section (Theorem \ref{ThM1}), we first describe shift invariant subspaces of the real Hardy space $H^2_\mathbb{R}(\mathbb{D})$. Although these subspaces were previously characterized by Mehanna and Singh \cite{MS}, in Proposition \ref{prop1} we recover the same characterization using a different techinique. We start by recalling the famous Beurling's invariant subspace theorem.

\begin{theorem}\label{BTH}
 Any non-zero shift invariant subspace $\mathcal{M}$ of $H^2(\mathbb{D})$ is of the form $\theta H^2(\mathbb{D})$, where $\theta$ is an inner function. Moreover, if $\theta_1$ and $\theta_2$ are two inner functions such that $\theta_1 H^2(\mathbb{D})=\theta_2 H^2(\mathbb{D})$, then there exists a unimodular complex scalar $\lambda$ such that $\theta_1=\lambda \theta_2$.    
\end{theorem}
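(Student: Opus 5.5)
We follow the classical Wold-decomposition route. Let $\mathcal{M}\neq\{0\}$ be invariant under $T_z$ (multiplication by $z$) on $H^2(\mathbb{D})$, and set $\mathcal{W}:=\mathcal{M}\ominus z\mathcal{M}$. We will show that $\mathcal{W}$ is one-dimensional, that it is spanned by an inner function $\theta$, and then that $\mathcal{M}=\theta H^2(\mathbb{D})$.

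\emph{Step 1: $\mathcal{W}\neq\{0\}$.} Since $\bigcap_{n}z^nH^2(\mathbb{D})=\{0\}$ and $\mathcal{M}\neq\{0\}$, we have $z\mathcal{M}\neq\mathcal{M}$. Indeed, if $z\mathcal{M}=\mathcal{M}$, then $\mathcal{M}=z^n\mathcal{M}\subset z^nH^2(\mathbb{D})$ for all $n$, which forces $\mathcal{M}=\{0\}$.

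\emph{Step 2: a unit vector of $\mathcal{W}$ is inner.} Pick $\theta\in\mathcal{W}$ with $\|\theta\|=1$. For $n\ge1$ we have $z^n\theta\in z\mathcal{M}$, so $\langle z^n\theta,\theta\rangle=0$. Passing to boundary values on $\mathbb{T}$, this reads
\[
\int_{\mathbb{T}}|\theta|^2\,\overline{z}^{\,n}\,dm=0 \quad\text{for } n\neq 0
\]
(the case $n<0$ follows by conjugation). Hence the real $L^1$ function $|\theta|^2$ has vanishing Fourier coefficients except the zeroth one, which equals $\|\theta\|^2=1$. So $|\theta|=1$ a.e. and $\theta$ is inner.

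\emph{Step 3: $\dim\mathcal{W}=1$.} If $\theta_1,\theta_2\in\mathcal{W}$ are orthogonal unit vectors, the same computation applied to $\langle z^n\theta_1,\theta_2\rangle$ for all $n\in\mathbb{Z}$ shows that $\theta_1\overline{\theta_2}$ has all Fourier coefficients equal to zero. But $|\theta_1\overline{\theta_2}|=1$ a.e. by Step 2, a contradiction.

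\emph{Step 4: $\mathcal{M}=\theta H^2(\mathbb{D})$.} Since multiplication by the inner function $\theta$ is an isometry on $H^2(\mathbb{D})$, the subspaces $z^n\mathcal{W}=\mathbb{C}\,z^n\theta$ are mutually orthogonal. The Wold decomposition of the isometry $T_z|_{\mathcal{M}}$ gives
\[
\mathcal{M}=\Big(\bigoplus_{n\ge0}z^n\mathcal{W}\Big)\oplus\bigcap_{n}z^n\mathcal{M}.
\]
The last summand is $\{0\}$, as in Step 1. Therefore $\mathcal{M}$ is the closed span of $\{z^n\theta\}_{n\ge0}$, and since multiplication by $\theta$ is an isometry, this closed span is exactly $\theta H^2(\mathbb{D})$.

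\emph{Uniqueness.} Suppose $\theta_1H^2(\mathbb{D})=\theta_2H^2(\mathbb{D})$. Then $\theta_1=\theta_2h$ and $\theta_2=\theta_1k$ with $h,k\in H^2(\mathbb{D})$. On $\mathbb{T}$ we get $|h|=|k|=1$ a.e. and $hk=1$, so $h$ and $k$ are inner; in particular $|h|\le1$ and $|k|\le1$ on $\mathbb{D}$. Since $hk=1$ on $\mathbb{D}$, this forces $|h|=1$ throughout $\mathbb{D}$, and by the maximum modulus principle $h$ is a unimodular constant $\lambda$. Thus $\theta_1=\lambda\theta_2$.

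\emph{Main obstacle.} The main obstacle is the passage to boundary values in Steps 2 and 3, that is, identifying $\langle z^n\theta_1,\theta_2\rangle$ with a Fourier coefficient of $\theta_1\overline{\theta_2}$. This uses the standard isometric identification of $H^2(\mathbb{D})$ with a subspace of $L^2(\mathbb{T})$, which we take as known.
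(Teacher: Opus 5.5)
The paper gives no proof of this statement. It quotes Beurling's classical theorem and uses it as a black box in Proposition~\ref{prop1} and Remark~\ref{Rem3.3}. Your argument is the standard Halmos wandering-subspace proof, and it is correct:
\begin{itemize}
\item $\mathcal{W}=\mathcal{M}\ominus z\mathcal{M}$ is nonzero.
\item Every unit vector $\theta\in\mathcal{W}$ is inner, because $|\theta|^2$ has no Fourier coefficients except the zeroth one.
\item Two orthonormal vectors of $\mathcal{W}$ would force $\theta_1\overline{\theta_2}=0$ a.e., which is impossible.
\item The Wold decomposition of $T_z|_{\mathcal{M}}$ has trivial unitary part, so $\mathcal{M}$ is the closed span of $\{z^n\theta\}_{n\ge0}$, which equals $\theta H^2(\mathbb{D})$.
\end{itemize}

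The uniqueness argument is also fine. Two steps deserve an extra line. First, an $H^2$ function with unimodular boundary values is bounded by $1$ on $\mathbb{D}$, since it is the Poisson integral of its boundary function. This is what makes $\theta$, $h$ and $k$ genuinely inner. Second, you state $hk=1$ on $\mathbb{T}$ and then use it on $\mathbb{D}$. The cleanest justification is that $\theta_1=\theta_1hk$ on $\mathbb{D}$ with $\theta_1\not\equiv0$, so the identity theorem gives $hk\equiv1$.

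Your route also proves more than the cited statement. Every step works verbatim in the real Hilbert space $H^2_{\mathbb{R}}(\mathbb{D})$. For functions with real Taylor coefficients, the quantities $\langle z^n\theta_1,\theta_2\rangle$ are automatically real, so real orthogonality makes them vanish. The Wold decomposition holds equally over $\mathbb{R}$. Applied to a shift-invariant $\mathcal{M}\subset H^2_{\mathbb{R}}(\mathbb{D})$, your argument directly produces an inner $\theta$ with real coefficients such that $\mathcal{M}=\theta H^2_{\mathbb{R}}(\mathbb{D})$. That is Proposition~\ref{prop1}, obtained without the complexification $\mathcal{M}_{\mathbb{C}}$ and without the phase normalization the paper needs to force $\theta=\widehat{\theta}$.
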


\begin{proposition}\label{prop1}
Suppose $\mathcal{M}$ is a non-zero shift invariant subspace of $H^2_\mathbb{R}(\mathbb{D})$. Then there exists an inner function $\theta$ in $H^2_\mathbb{R}(\mathbb{D})$ such that $$\mathcal{M}=\theta H^2_\mathbb{R}(\mathbb{D}).$$
\end{proposition}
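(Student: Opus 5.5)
Complexify and apply Beurling's theorem (Theorem \ref{BTH}). Let $\mathcal{M}_\mathbb{C}=\{f+ih: f,h\in\mathcal{M}\}$, viewed inside $H^2(\mathbb{D})$.

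\emph{Step 1: $\mathcal{M}_\mathbb{C}$ is a closed, $\widehat{\ }$-stable, shift invariant complex subspace.} It is a complex subspace, and it is invariant under multiplication by $z$ because $\mathcal{M}$ is. For closedness, note that if $f,h\in H^2_\mathbb{R}(\mathbb{D})$ then $\|f+ih\|^2=\|f\|^2+\|h\|^2$, since the coefficients of $f$ and $h$ are real. Hence a Cauchy sequence $f_n+ih_n$ in $\mathcal{M}_\mathbb{C}$ gives Cauchy sequences $(f_n)$ and $(h_n)$ in the closed space $\mathcal{M}$. Moreover $\widehat{f+ih}=f-ih\in\mathcal{M}_\mathbb{C}$, so $\widehat{\mathcal{M}_\mathbb{C}}=\mathcal{M}_\mathbb{C}$. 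Since $\mathcal{M}\neq 0$, Theorem \ref{BTH} gives an inner function $\theta_0$ with $\mathcal{M}_\mathbb{C}=\theta_0H^2(\mathbb{D})$.

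\emph{Step 2: choose the unimodular constant so that $\theta$ has real coefficients.} Since $\widehat{\theta_0 k}=\widehat{\theta_0}\,\widehat{k}$ and $\widehat{\ }$ is a bijection of $H^2(\mathbb{D})$, we get
\[
\widehat{\theta_0}H^2(\mathbb{D})=\widehat{\mathcal{M}_\mathbb{C}}=\mathcal{M}_\mathbb{C}=\theta_0H^2(\mathbb{D}).
\]
Also $\widehat{\theta_0}$ is inner. The uniqueness part of Theorem \ref{BTH} then gives $\widehat{\theta_0}=\lambda\theta_0$ with $|\lambda|=1$. Write $\lambda=e^{-2i\alpha}$ and set $\theta=e^{i\alpha}\theta_0$. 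Then
\[
\widehat{\theta}=e^{-i\alpha}\widehat{\theta_0}=e^{-i\alpha}e^{-2i\alpha}\theta_0=e^{-2i\alpha}\theta .
\]
This is the wrong sign, so use $\alpha$ with the opposite sign. Solving $\widehat{c\theta_0}=\bar c\lambda\theta_0=c\theta_0$ requires $c^2=\lambda$, and any square root $c$ of $\lambda$ works. Then $\theta=c\theta_0$ satisfies $\widehat{\theta}=\theta$, so $\theta$ is inner, lies in $H^2_\mathbb{R}(\mathbb{D})$, and $\theta H^2(\mathbb{D})=\mathcal{M}_\mathbb{C}$.

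\emph{Step 3: recover $\mathcal{M}$ by taking real parts.} First, $\mathcal{M}\subseteq\mathcal{M}_\mathbb{C}\cap H^2_\mathbb{R}(\mathbb{D})$. Conversely, if $f+ih\in H^2_\mathbb{R}(\mathbb{D})$ with $f,h\in\mathcal{M}$, then the coefficients of $h$ are real and must vanish, so $h=0$. Hence $\mathcal{M}=\mathcal{M}_\mathbb{C}\cap H^2_\mathbb{R}(\mathbb{D})=\phi(\mathcal{M}_\mathbb{C})$.

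It remains to show $\theta H^2(\mathbb{D})\cap H^2_\mathbb{R}(\mathbb{D})=\theta H^2_\mathbb{R}(\mathbb{D})$. The inclusion $\supseteq$ holds because a product of two functions with real coefficients has real coefficients. For $\subseteq$, suppose $\theta k$ is real. Then $\theta k=\widehat{\theta k}=\theta\widehat{k}$. Since $\theta\neq0$ is analytic, $k=\widehat{k}$, so $k\in H^2_\mathbb{R}(\mathbb{D})$. Therefore $\mathcal{M}=\theta H^2_\mathbb{R}(\mathbb{D})$.

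The only delicate point is Step 2: the Beurling generator is determined only up to a unimodular constant, so one must produce a real-coefficient generator. Using $\widehat{\mathcal{M}_\mathbb{C}}=\mathcal{M}_\mathbb{C}$ together with the uniqueness clause of Theorem \ref{BTH}, and then taking a square root of $\lambda$, handles this.
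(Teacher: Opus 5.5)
Your proposal is correct and follows essentially the paper's route: complexify, apply Beurling, use $\widehat{\mathcal{M}_\mathbb{C}}=\mathcal{M}_\mathbb{C}$ with the uniqueness clause to get a real-coefficient generator, then recover $\mathcal{M}$ as the real part. The only differences are minor: you take a square root $c$ of $\lambda$ where the paper first rotates $\theta$ to have a nonzero real coefficient and deduces $\lambda=1$, and you should delete the abandoned $\lambda=e^{-2i\alpha}$ computation in Step 2 (it also contains an arithmetic slip) and keep only the clean $c^2=\lambda$ argument.
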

\begin{proof}
Consider the complexification of $\mathcal{M}$, $$\mathcal{M}_\mathbb{C}:=\mathcal{M}+i\mathcal{M}=\{f+ih : f,h \in \mathcal{M}\}.$$
Then, $\mathcal{M}_\mathbb{C}$ is a non-zero shift invariant subspace of $H^2(\mathbb{D})$. Thus, by Theorem \ref{BTH}, we have
\begin{eqnarray} \label{eqB}
\mathcal{M}_\mathbb{C}=\theta H^2(\mathbb{D}),   \end{eqnarray}
for some inner function $\theta$ in $H^2(\mathbb{D})$. Further, without loss of generality, we can choose $\theta$ in Equation (\ref{eqB}) such that the power series representation of $\theta$ has a non-zero real Fourier coefficient. Indeed, let $\theta=\sum_{n=0}^{\infty}a_nz^n$. Since $\theta$ is non-zero, there exists a non-negative integer $k$ such that $a_k\ne 0$. Let $a_k=e^{i\alpha}|a_k| \text{ for some } \alpha \in \mathbb{R}$. Then $\theta_1:=e^{-i\alpha}\theta$ is an inner function with a non-zero real Fourier coefficient and $\mathcal{M}_\mathbb{C}=\theta_1 H^2(\mathbb{D})$.
 
 Further, we also observe that $\mathcal{M}_\mathbb{C}=\widehat{\mathcal{M}_\mathbb{C}}$. Therefore 
\begin{eqnarray}\label{e3.01}
\theta H^2(\mathbb{D})=\widehat{\theta} H^2(\mathbb{D}).
\end{eqnarray}
Now, since $\theta$ is an inner function, therefore $\widehat{\theta}$ is an inner function. Thus, from Equation (\ref{e3.01}) and Theorem \ref{BTH}, there exists a non-zero unimodular complex scalar $\lambda$ such that $\theta=\lambda \widehat{\theta}$. Further, since $\theta$ has a non-zero real Fourier coefficient, therefore $\lambda=1$, and thus, we get $\theta=\widehat{\theta}$.
Therefore, from Equation (\ref{eqB}), we have 
$$\mathcal{M}=\phi(\mathcal{M}_\mathbb{C})=\phi(\theta H^2(\mathbb{D}))=\theta \phi(H^2(\mathbb{D}))=\theta H^2_\mathbb{R}(\mathbb{D}).$$
This completes the proof.
\end{proof}

\begin{theorem}\label{ThM1}
Suppose $\mathcal{M}$ is a non-zero subspace of $H_\mathbb{R}^2(\mathbb{D})$ which is nearly invariant under the backward shift operator $T^*_z$. Then there exists a $T^*_z$-invariant subspace $\mathcal{N}$ of $H_\mathbb{R}^2(\mathbb{D})$, and a function $g$ in $\mathcal{M}$, orthogonal to $\mathcal{M}\cap zH^2_\mathbb{R}(\mathbb{D})$, satisfying $g(0)>0$ such that $$\mathcal{M}=g\mathcal{N}.$$
Further, for all $gf \in \mathcal{M}$,  $\|gf\|=\|f\|$. 
\end{theorem}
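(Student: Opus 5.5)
The plan is to complexify, apply Hitt's theorem (Theorem \ref{ThH}), and then symmetrise back, in the same way Proposition \ref{prop1} was obtained from Theorem \ref{BTH}.

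\textbf{Step 1: the complexification is nearly invariant.} Let $\mathcal{M}_\mathbb{C}=\mathcal{M}+i\mathcal{M}$. First I will check that it is a closed subspace of $H^2(\mathbb{D})$. The map $f+ih\mapsto (f,h)$ is a real isometry up to a factor, because $\|f+ih\|^2=\|f\|^2+\|h\|^2$ for real-coefficient $f,h$. Since $\mathcal{M}$ is closed, so is $\mathcal{M}_\mathbb{C}$.

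Next, if $F=f+ih\in\mathcal{M}_\mathbb{C}$ with $F(0)=0$, then $f(0)$ and $h(0)$ are real and sum to $0$ in the form $f(0)+ih(0)$. Hence $f(0)=h(0)=0$. Therefore $T_z^*F=T_z^*f+iT_z^*h\in\mathcal{M}_\mathbb{C}$, so $\mathcal{M}_\mathbb{C}$ is nearly $T^*_z$-invariant in $H^2(\mathbb{D})$.

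We also have $\widehat{\mathcal{M}_\mathbb{C}}=\mathcal{M}_\mathbb{C}$. Indeed, $\widehat{f+ih}=f-ih$.

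\textbf{Step 2: the Hitt function is real.} Hitt's theorem gives a unit vector $g\in\mathcal{M}_\mathbb{C}\ominus(\mathcal{M}_\mathbb{C}\cap zH^2(\mathbb{D}))$ with $g(0)>0$. It also gives a $T^*_z$-invariant subspace $\mathcal{N}'$ with $\mathcal{M}_\mathbb{C}=g\mathcal{N}'$ and $\|gF\|=\|F\|$.

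I claim $\widehat g=g$. The set $\mathcal{M}_\mathbb{C}\cap zH^2(\mathbb{D})$ is invariant under $\widehat{\ }$. Using Lemma \ref{Lem1}, $\widehat g$ lies in $\mathcal{M}_\mathbb{C}$ and is orthogonal to that intersection. It also has unit norm and $\widehat g(0)=\overline{g(0)}=g(0)>0$.

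The orthogonal complement $\mathcal{M}_\mathbb{C}\ominus(\mathcal{M}_\mathbb{C}\cap zH^2)$ has dimension at most one, since evaluation at $0$ is injective on it. Hence $\widehat g=g$, so $g\in H^2_\mathbb{R}(\mathbb{D})$ and $g=\phi(g)\in\phi(\mathcal{M}_\mathbb{C})=\mathcal{M}$.

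Orthogonality of $g$ to $\mathcal{M}\cap zH^2_\mathbb{R}(\mathbb{D})$ follows, because this set sits inside $\mathcal{M}_\mathbb{C}\cap zH^2(\mathbb{D})$.

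\textbf{Step 3: descend $\mathcal{N}'$ to the real space.} Let $\mathcal{N}'$ be as above. If $F\in\mathcal{N}'$, then $gF\in\mathcal{M}_\mathbb{C}$, so $\widehat{gF}=g\widehat F\in\mathcal{M}_\mathbb{C}=g\mathcal{N}'$. Since $g\not\equiv0$, this gives $\widehat F\in\mathcal{N}'$, so $\widehat{\mathcal{N}'}=\mathcal{N}'$.

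Set $\mathcal{N}=\phi(\mathcal{N}')=\mathcal{N}'\cap H^2_\mathbb{R}(\mathbb{D})$. The equality holds because $\phi(F)=(F+\widehat F)/2\in\mathcal{N}'$.

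\begin{itemize}
\item \emph{Closedness.} $\mathcal{N}$ is closed, being the intersection of two closed sets.
\item \emph{Invariance.} $\mathcal{N}$ is $T^*_z$-invariant, since $T^*_z$ preserves real coefficients and $\mathcal{N}'$ is invariant.
\item \emph{The factorisation.} Because $g$ is real, $\phi(gF)=g\phi(F)$. Hence $\mathcal{M}=\phi(\mathcal{M}_\mathbb{C})=\phi(g\mathcal{N}')=g\phi(\mathcal{N}')=g\mathcal{N}$.
\item \emph{The isometry.} The norm identity is inherited from Hitt's theorem.
\end{itemize}

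\textbf{Main obstacle.} The delicate point is Step 2: showing the Hitt function $g$ is real. This needs the uniqueness of $g$, which is what the one-dimensionality argument supplies. It also needs care with the orthogonality under $\widehat{\ }$, which is where Lemma \ref{Lem1} is applied in the form $\langle \widehat g, \widehat K\rangle=\langle K,g\rangle$.
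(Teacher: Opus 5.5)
Your proposal is correct and follows essentially the same route as the paper: complexify, apply Hitt's theorem, show $\widehat g = g$ via the one-dimensionality of $\mathcal{M}_\mathbb{C}\ominus(\mathcal{M}_\mathbb{C}\cap zH^2(\mathbb{D}))$, and symmetrise back using $\phi(g\mathcal{N}')=g\phi(\mathcal{N}')$. Two details are handled more cleanly than in the paper: you identify $\mathcal{N}=\mathcal{N}'\cap H^2_\mathbb{R}(\mathbb{D})$, which gives closedness and invariance directly instead of through the paper's sequence argument, and you verify the closedness and near invariance of $\mathcal{M}_\mathbb{C}$, which the paper only asserts.
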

\begin{proof}
We consider the complexification of $\mathcal{M}$, $$\mathcal{M}_\mathbb{C}:=\mathcal{M}+i\mathcal{M}=\{f+ih : f,h \in \mathcal{M}\}.$$
Then we observe that $\mathcal{M}_\mathbb{C}$ is a non-zero subspace of $H^2(\mathbb{D})$. Further,
since $\mathcal{M}$ is a nearly $T^*_z$-invariant subspace of $H^2_{\mathbb{R}}(\mathbb{D})$, therefore, $\mathcal{M}_\mathbb{C}$ is a nearly  $T^*_z$-invariant subspace of $H^2(\mathbb{D})$. Thus, by Theorem \ref{ThH}, there exists a $T^*_z$-invariant subspace $\mathcal{K} \subset H^2(\mathbb{D})$, and a function $g \in \mathcal{M}_{\mathbb{C}}$, orthogonal to $\mathcal{M}_\mathbb{C}\cap zH^2(\mathbb{D})$ and $g(0)>0$, such that $$\mathcal{M}_\mathbb{C}=g\mathcal{K}.$$
 Since, $\mathcal{M}_\mathbb{C}=\widehat{\mathcal{M}_\mathbb{C}}$ and $g \in \mathcal{M}_\mathbb{C}$, therefore $\widehat{g} \in \mathcal{M}_\mathbb{C}$. Also, for any $h=zf \in \mathcal{M}_\mathbb{C}\cap zH^2(\mathbb{D})$
 \begin{eqnarray*}
 \langle \widehat{g},h \rangle =\langle \widehat{g},zf \rangle = \langle z\widehat{f}, g \rangle = 0. &&\text{(by Lemma \ref{Lem1})}   
 \end{eqnarray*}
This shows that $\widehat{g}$ is orthogonal to  $\mathcal{M}_\mathbb{C}\cap zH^2(\mathbb{D})$. Now since, $\mathcal{M}_{\mathbb{C}}$ is a nearly $T^*_z$-invariant subspace of $H^2(\mathbb{D})$, therefore $\mathcal{M}_\mathbb{C}\ominus (\mathcal{M}_\mathbb{C}\cap zH^2(\mathbb{D})) $ is one dimensional. Therefore $\widehat{g}=\alpha g$, for some $\alpha \in \mathbb{C}$. Since $g(0)>0$, thus $\widehat{g}(0)=g(0)$ and hence $\alpha=1$. This concludes that $g=\widehat{g}$. Thus, $g\in \mathcal{M} \ominus (\mathcal{M}\cap zH^2_{\mathbb{R}}(\mathbb{D}))$. 

 Further, $\mathcal{M}_\mathbb{C}=g\mathcal{K}$ implies 
\begin{eqnarray*}
\mathcal{M}&=&\phi(\mathcal{M}_\mathbb{C})\\
&=&\phi(g\mathcal{K})\\&=&g\phi(\mathcal{K})\\&=&g\mathcal{N}, \text{ where } \mathcal{N}=\phi(\mathcal{K}).
\end{eqnarray*}
Now we show that $\mathcal{N}$ is a closed $T^*_z$-invariant subspace. Since $\mathcal{K}$ is a vector subspace of $H^2(\mathbb{D})$ and $\mathcal{N}=\phi(\mathcal{K})$, therefore it is easy to see that $\mathcal{N}$ is a vector subspace of $H^2_{\mathbb{R}}(\mathbb{D})$. Now, take a sequence $\{h_n\}_{n=1}^{\infty}$ in $\mathcal{N}$  such that $h_n \longrightarrow h$ for some $h$ in $H^2_{\mathbb{R}}(\mathbb{D})$, then $h_n=\phi(f_n)$ for $f_n \in \mathcal{K}$. Since $g=\widehat{g}$, therefore $\mathcal{K}=\widehat{\mathcal{K}}$, and thus $\phi(\mathcal{K}) \subset \mathcal{K}$. Hence $h \in \mathcal{K}$. Moreover,
\begin{eqnarray*}
\|gh_n-gh\|&=&\|g\phi(f_n)-gh\|\\ &=&\|\phi(f_n)-h\|\longrightarrow 0, \text{ as }n \rightarrow \infty. \quad \text{(by Theorem \ref{ThH} )}    
\end{eqnarray*}
Now, since $\mathcal{M}$ is closed and $gh_n\in \mathcal{M}$. Therefore $gh \in \mathcal{M}=g\mathcal{N},$
which shows that $h \in \mathcal{N}$. Thus $\mathcal{N}$ is closed.

 Next, take an element $h\in \mathcal{N}$. Then $h=\phi(f)$ for some $f\in \mathcal{K}$. Then
\begin{eqnarray*}
T^*_zh&=&
T^*_z\phi(f)\\&=&T^*_z\bigg(\frac{f + \widehat{f}}{2}\bigg)\\
&=&\frac{T^*_zf + \widehat{T^*_zf}}{2} \quad \text{(since $T^*_z\widehat{f}=\widehat{T^*_zf})$}\\
&\in &  \phi(\mathcal{K})=\mathcal{N},    
\end{eqnarray*}
which shows that $\mathcal{N}$ is $T^*_z$-invariant.

 Lastly, since $\phi(\mathcal{K})\subset \mathcal{K}$, therefore, for any $h=gf\in \mathcal{M}$, where $f \in \mathcal{N}= \phi(\mathcal{K})$,
\begin{eqnarray*}
\|h\|=\|gf\|=\|f\|. &&\text{(by Theorem \ref{ThH})}  
\end{eqnarray*}
This completes the proof.   
\end{proof}

\textnormal{We end this section with a few remarks that illuminate some of the structural properties of the spaces $\mathcal{N}$ and $\mathcal{K}$, which appeared in the above proof.}

\begin{remark}
 We show that $\mathcal{K}=\mathcal{N}_\mathbb{C}$, where $\mathcal{N}_\mathbb{C}$ is the complexification of $\mathcal{N}$. This is easily understood because $\mathcal{M}_\mathbb{C}=g\mathcal{K}$ and $\mathcal{M}=g\mathcal{N}$. Therefore, $\mathcal{M}_\mathbb{C}=\mathcal{M}+i\mathcal{M}
 =g\mathcal{N}_\mathbb{C}$,
 which implies $\mathcal{K}=\mathcal{N}_\mathbb{C}$.

\end{remark}

\begin{remark}\label{Rem3.3}
We establish the connection between these two inner functions. More specifically, we show that $\theta=\psi$.
In Theorem \ref{ThM1}, we observed that $\mathcal{K}$ and $\mathcal{N}$ are backward shift invariant subspaces of $H^2(\mathbb{D})$ and $H^2_{\mathbb{R}}(\mathbb{D})$, respectively.

Since, $\mathcal{K}$ is a $T^*_z$-invariant subspace of $H^2(\mathbb{D})$. Therefore, by Theorem \ref{BTH},
\begin{eqnarray*} \label{e3.1}
\mathcal{K}=H^2(\mathbb{D})\ominus\theta H^2(\mathbb{D}),   
\end{eqnarray*}
for some inner function $\theta$ in $H^2(\mathbb{D})$. In Equation (\ref{e3.1}), we can choose  $\theta$ such that the power series representation of $\theta$ has a non-zero real Fourier coefficient.

 Further, $\mathcal{N}$ is a $T^*_z$-invariant susbspace of $H^2_{\mathbb{R}}(\mathbb{D})$. Therefore, by Proposition \ref{prop1},
$$\mathcal{N}=H^2_{\mathbb{R}}(\mathbb{D})\ominus \psi H^2_{\mathbb{R}}(\mathbb{D}),$$
for some inner function $\psi$ in $H^2_{\mathbb{R}}(\mathbb{D})$.
Also, since $\mathcal{K}=\widehat{\mathcal{K}}$ and by using Lemma \ref{Lem2}, we have
\begin{eqnarray*}
H^2(\mathbb{D}) \ominus \theta H^2(\mathbb{D})&=&\widehat{H^2(\mathbb{D}) \ominus \theta H^2(\mathbb{D})}\\&=&H^2(\mathbb{D}) \ominus \widehat{\theta} H^2(\mathbb{D}),    
\end{eqnarray*}
which shows that $ \theta H^2(\mathbb{D})= \widehat{\theta} H^2(\mathbb{D})$. This, together with the fact that $\theta$ has a non-zero Fourier coefficient, implies that $\theta=\widehat{\theta}$.
Further, using Lemma \ref{Lem3}, we get $$H^2_{\mathbb{R}}(\mathbb{D}) \ominus \phi(\mathcal{K})=\phi(H^2(\mathbb{D})\ominus \mathcal{K}),$$
which implies $$\psi H^2_{\mathbb{R}}(\mathbb{D})=\phi(\theta H^2(\mathbb{D}))=\theta H^2_{\mathbb{R}}(\mathbb{D}).$$
Consequently, we get $\theta H^2(\mathbb{D})=\psi H^2(\mathbb{D})$.
Thus $\theta=\lambda \psi$ for some unimodular complex scalar $\lambda$. Since $\theta=\widehat{\theta}$ and $\psi=\widehat{\psi}$, therefore we get,
\begin{eqnarray*}
   \lambda \psi= \overline{\lambda}\psi,
\end{eqnarray*}
which shows that $\lambda=\overline{\lambda}$. Therefore $\lambda=1$, and hence $\theta=\psi$.

\end{remark}

\textnormal{The following remark concerns Hitts's theorem (Theorem \ref{ThH}) if $\mathcal{M}$ is a nearly $T^*_z$-invariant subspace of $H^2(\mathbb{D})$ and $\mathcal{M}=\widehat{\mathcal{M}}$. Then the unique function $g$ given by Hitt's theorem  in the description of $\mathcal{M}$ satisfies $g=\widehat{g}$.}
\begin{remark}\label{Rem3.5}
Consider a subspace $\mathcal{M}$ of $H^2(\mathbb{D})$ that is nearly $T^*_z$-invariant and satisfies $\mathcal{M}=\widehat{\mathcal{M}}$. Then we observe that $\phi(\mathcal{M})=\mathcal{M} \cap H^2_{\mathbb{R}}(\mathbb{D})$ and  $\phi(\mathcal{M})$ is a nearly $T^*_z$-invariant subspace of $H^2_{\mathbb{R}}(\mathbb{D})$. Therefore, by Theorem \ref{ThM1}, there exists $g \in \phi(\mathcal{M})$ satisfying $g=\widehat{g}$ such that $$\phi(\mathcal{M})=g\mathcal{N},$$
where $\mathcal{N}$ is a $T^*_z$-invariant subspace of $H^2_{\mathbb{R}}(\mathbb{D})$. Further, the condition $\mathcal{M}=\widehat{\mathcal{M}}$ implies that $\mathcal{M}=\phi(\mathcal{M}) + i\phi(\mathcal{M})$. Indeed, $\phi(\mathcal{M}) + i\phi(\mathcal{M}) \subset \mathcal{M}$, and for any $f \in \mathcal{M},$
\begin{eqnarray*}
 f=\frac{f + \widehat{f}}{2} + i\Bigg(\frac{f - \widehat{f}}{2i}\Bigg) \in \phi(\mathcal{M}) + i\phi(\mathcal{M}). 
\end{eqnarray*}
This concludes that
\begin{align*}
 \mathcal{M}= \phi(\mathcal{M}) + i\phi(\mathcal{M})=g\mathcal{N}_{\mathbb{C}}, 
\end{align*} 
where $\mathcal{N_{\mathbb{C}}}=\mathcal{N} + i\mathcal{N}$ is a $T^*_z$-invariant subspace of $H^2(\mathbb{D})$. This is what we required.
\end{remark}

\section{Nearly invariant subspaces with finite defect} \label{NISF}

\textnormal{In this section, we shall describe nearly invariant subspaces with finite defect under the backward shift operator on the real Hardy space $H^2_{\mathbb{R}}(\mathbb{D})$.  }

\begin{definition}\label{defD}
 A subspace $\mathcal{M}$ of $H^2_{\mathbb{R}}(\mathbb{D})$ is called nearly invariant with defect under the backward shift operator $T^*_z$ if there exists a finite dimensional subspace $\mathcal{F}$ (orthogonal to $\mathcal{M}$) of $H^2_{\mathbb{R}}(\mathbb{D})$ such that $T^*_zf \in \mathcal{M} \oplus \mathcal{F}$, whenever $f \in \mathcal{M}$ and $f(0)=0$. In other words, 
 \begin{eqnarray} \label{eD}
     T^*_z(\mathcal{M}\cap zH^2_{\mathbb{R}}(\mathbb{D}))\subset \mathcal{M} \oplus \mathcal{F}.
 \end{eqnarray}
A finite dimensional subspace $\mathcal{F}$ of the smallest dimension that satisfies (\ref{eD}) is called the \textit{defect space} of $\mathcal{M}$, and its dimension is called the \textit{defect} of $\mathcal{M}$. 
\end{definition}

\textnormal{Before we present the main theorem of this section (Theorem \ref{MT2}), we recall the following result due to Chalendar, Gallardo-Guti$\rm{\acute{e}}$rrez, and Partington from \cite{CGP1}.}

\begin{theorem}\cite[Theorem 3.2]{CGP1}\label{CGP}
Suppose $\mathcal{M}$ is a non-zero subspace of $H^2(\mathbb{D})$ which is nearly $T^*_z$-invariant with defect $n$, and let $\{e_i\}_{i=1}^{n}$ be an orthonormal basis of the defect space.
\begin{itemize}
    \item[(i)] If there exists a function in $\mathcal{M}$ which does not vanish at $0$, then there exists a $T^*_z\oplus T^*_z$-invariant subspace $\mathcal{K}\subset H^2(\mathbb{D}) \oplus H^2(\mathbb{D},\mathbb{C}^n)$ such that
\begin{eqnarray*}
    \mathcal{M}=\Big\{f=gk + z\sum_{i=1}^{n}k_ie_i : (k,k_1,\dots , k_n)\in \mathcal{K}\Big\},
\end{eqnarray*}
where $g$ is the function of unit norm in $\mathcal{M}$, orthogonal to $\mathcal{M}\cap zH^2(\mathbb{D})$, satisfying $g(0)>0$. Further, for all $f\in \mathcal{M}$,
\begin{eqnarray*}
\|f\|^2=\|k\|^2 +\sum_{i=1}^{n} \|k_i\|^2.
\end{eqnarray*}

\item[(ii)] If all functions in $\mathcal{M}$ vanish at $0$, then there exists a $T^*_z$-invariant subspace $\mathcal{K}\subset H^2(\mathbb{D},\mathbb{C}^n)$ such that
$$\mathcal{M}=\Big\{f=z\sum_{i=1}^{n}k_ie_i : (k_1,\dots, k_n)\in \mathcal{K}\Big\}.$$
 Further, for all $f \in \mathcal{M}$,
\begin{eqnarray*}
    \|f\|^2=\sum_{i=1}^{n}\|k_i\|^2.
\end{eqnarray*}
\end{itemize}

\end{theorem}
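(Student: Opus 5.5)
The plan is to run Sarason's iterative "peeling" argument for Hitt's theorem (Theorem \ref{ThH}), with the defect space $\mathcal{F}=\operatorname{span}\{e_1,\dots,e_n\}$ carried along at each step. I treat case (i) first; case (ii) is the same argument without the $g$-terms.

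\textbf{Case (i): the one-step decomposition.} Since $\mathcal{M}$ is not contained in $zH^2(\mathbb{D})$, the space $\mathcal{M}\ominus(\mathcal{M}\cap zH^2(\mathbb{D}))$ is one dimensional, spanned by the unit vector $g$ with $g(0)>0$. Take $f\in\mathcal{M}$ and set $c_0=f(0)/g(0)$. Then $f-c_0g\in\mathcal{M}\cap zH^2(\mathbb{D})$ and it is orthogonal to $g$. By the defect condition,
\[
T^*_z(f-c_0g)=m_1+\sum_{i=1}^n a_i^{(1)}e_i,\qquad m_1\in\mathcal{M}.
\]
Hence $f=c_0g+zm_1+z\sum_i a_i^{(1)}e_i$. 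Since $T^*_z$ is isometric on $zH^2(\mathbb{D})$ and $\mathcal{F}\perp\mathcal{M}$, this gives
\[
\|f\|^2=|c_0|^2+\|m_1\|^2+\sum_i|a_i^{(1)}|^2 .
\]
Write $R f:=m_1$; this is a linear contraction on $\mathcal{M}$.

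\textbf{Case (i): iteration.} Applying the same step to $m_1,m_2,\dots$ gives, for each $N$,
\[
f=g\sum_{j<N}c_jz^j+z\sum_{i=1}^n\Big(\sum_{j<N}a_i^{(j+1)}z^j\Big)e_i+z^Nm_N ,
\]
\[
\|f\|^2=\sum_{j<N}|c_j|^2+\sum_{i,\,j<N}|a_i^{(j+1)}|^2+\|m_N\|^2 .
\]
So the coefficient sequences are square summable, and we can define $k=\sum c_jz^j$ and $k_i=\sum_j a_i^{(j+1)}z^j$ in $H^2(\mathbb{D})$.

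For convergence, use pointwise evaluation on $\mathbb{D}$. The reproducing-kernel bound gives
\[
|z^Nm_N(z)|\le |z|^N\|f\|(1-|z|^2)^{-1/2}\to 0 .
\]
Also, $H^2$ convergence of the partial sums of $k$ and $k_i$ implies locally uniform convergence. Therefore $f=gk+z\sum_ik_ie_i$ pointwise, hence as elements of $H^2(\mathbb{D})$.

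Define $V f=(k,k_1,\dots,k_n)$. Then $V$ is a linear contraction, and
\[
\|f\|^2=\|Vf\|^2+\lim_N\|R^Nf\|^2 .
\]

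\textbf{Invariance of $\mathcal{K}$.} By construction, $V(Rf)$ is $(T^*_z\oplus T^*_z)(Vf)$ up to the first coefficient bookkeeping: both amount to dropping the first coefficient of each sequence. Thus $\mathcal{K}:=\overline{V(\mathcal{M})}$ is invariant under $T^*_z\oplus T^*_z$. Every $f\in\mathcal{M}$ has the stated form with $(k,k_i)=Vf\in\mathcal{K}$.

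For the reverse inclusion, take $(k,k_i)\in\mathcal{K}$ as a limit of $Vf_m$. Once $V$ is isometric, $(f_m)$ is Cauchy, and its limit $f\in\mathcal{M}$ satisfies $f=gk+z\sum k_ie_i$ by pointwise convergence. So $\mathcal{M}$ is exactly the displayed set.

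\textbf{Main obstacle: the norm identity.} The key step is showing $\|R^Nf\|\to0$, which is what makes $V$ isometric. My first attempt is the observation that $m_N(0)=c_Ng(0)$, so $\sum_N|m_N(0)|^2<\infty$. I would combine this with weak convergence: $R^Nf$ is bounded, and its Taylor coefficients are governed by the tail coefficients of $k,k_i$ through the expansion of $m_N$, which should give $R^Nf\to0$ weakly. I would then upgrade weak to strong convergence by applying the norm identity to $R^Nf$ itself, since $R^{N+M}f=R^M(R^Nf)$.

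If that fails, I would fall back on Sarason's de Branges--Rovnyak argument. There $\mathcal{M}$ is realised isometrically as $\mathcal{H}(b)$-type data for the contractive column $(g,ze_1,\dots,ze_n)$, and the identity is the standard one in that theory.

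\textbf{Case (ii).} If every function in $\mathcal{M}$ vanishes at $0$, then $\mathcal{M}\subset zH^2(\mathbb{D})$, so $T^*_zf=m_1+\sum a_i^{(1)}e_i$ for every $f\in\mathcal{M}$ and no $g$ is needed. The same iteration, with $c_j\equiv0$, gives $f=z\sum k_ie_i$ with $\|f\|^2=\sum\|k_i\|^2$, and $\mathcal{K}=\overline{V(\mathcal{M})}\subset H^2(\mathbb{D},\mathbb{C}^n)$ is $T^*_z$-invariant.
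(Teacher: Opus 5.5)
The paper gives no proof of this statement; it is quoted from \cite{CGP1}, so your proposal has to stand on its own. Most of your iteration is correct: the one-step splitting, the identity $\|f\|^2=\|Vf\|^2+\lim_N\|R^Nf\|^2$, the pointwise representation $f=gk+z\sum_i k_ie_i$, and the intertwining $VR=(T_z^*\oplus T_z^*)V$ (which holds exactly) are all fine. The gap is the step you flag yourself: you never prove $\|R^Nf\|\to 0$, i.e.\ that $V$ is isometric. Everything after it depends on this: closedness of $V(\mathcal{M})$, the reverse inclusion, and the norm formula. Without it you only have $\mathcal{M}=\{gk+z\sum_i k_ie_i:(k,k_1,\dots,k_n)\in V(\mathcal{M})\}$, with $V(\mathcal{M})$ possibly non-closed and only $\|f\|^2\ge\|k\|^2+\sum_i\|k_i\|^2$. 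Your weak-to-norm upgrade is circular. Applying the identity to $R^Nf$ gives $\|R^Nf\|^2=\|(T_z^*\oplus T_z^*)^NVf\|^2+\lim_M\|R^{N+M}f\|^2$, which only says that $\|R^Nf\|^2$ decreases to the limit you started with. A bounded weakly null sequence need not be norm null.

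In fact no argument built only from the ingredients you use can succeed. Take an extreme point $b$ of the unit ball of $H^\infty$ that is not inner, with $b(0)=0$; for example $b=zs$ with $s$ outer, $|s|=1$ on an arc and $|s|=1/2$ elsewhere. Run your scheme in the de Branges--Rovnyak space $\mathcal{H}(b)$ with zero defect and $g=1$, which is its unit reproducing kernel at $0$. Because $b$ is extreme, $\|T_z^*x\|_b^2=\|x\|_b^2-|x(0)|^2$ on $\mathcal{H}(b)$. Hence the splitting, the weak nullity of the remainders, and the identity $\|x\|_b^2=\|x\|_{H^2}^2+\lim_N\|T_z^{*N}x\|_b^2$ (here $Vx=x$) all hold verbatim, including its application to $T_z^{*N}x$. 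Yet $\mathcal{H}(b)$ is not isometrically contained in $H^2$, so $\lim_N\|T_z^{*N}x\|_b>0$ for some $x$. The proof must therefore use, in an essential way, that $\mathcal{M}$ carries the norm of $H^2$. This is exactly the nontrivial part of Hitt's theorem and of its finite-defect extension. Your fallback does not supply it, for two reasons. First, $(g,ze_1,\dots,ze_n)$ is in general not a contractive, or even bounded, multiplier, since $g$ and the $e_i$ are only in $H^2$. Second, in de Branges--Rovnyak theory the space norm agrees with the $H^2$ norm only when the defining function is inner (or $0$). Calling that equality ``standard'' therefore assumes precisely what has to be proved.
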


\textnormal{For the sake of notational clarity and simplicity, we restrict our attention to the case of defect one, focusing on the essential features of the result while omitting lengthy technical details. The extension to the general case follows in a similar manner.}

\begin{theorem}\label{MT2}
Suppose $\mathcal{M}$ is a non-zero subspace of $H^2_{\mathbb{R}}(\mathbb{D})$ which is nearly $T^*_z$-invariant with defect 1, and let the defect space be spanned by a unit vector $e$. 
\begin{itemize}
    \item[(i)] If there exists a function in $\mathcal{M}$ which does not vanish at $0$, then there exists a $T^*_z\oplus T^*_z$-invariant subspace $\mathcal{N}\subset H^2_{\mathbb{R}}(\mathbb{D}) \oplus H^2_\mathbb{R}(\mathbb{D})$ such that
\begin{eqnarray*}
    \mathcal{M}=\Big\{f=gh + zh_1e : (h, h_1)\in \mathcal{N}\Big\},
\end{eqnarray*}
where $g$ is the function of unit norm in $\mathcal{M}$, orthogonal to $\mathcal{M}\cap zH^2_{\mathbb{R}}(\mathbb{D})$, satisfying $g(0)>0$. Further, for all $f\in \mathcal{M}$,
\begin{eqnarray*}
\|f\|^2=\|h\|^2 + \|h_1\|^2.
\end{eqnarray*}
\item[(ii)] If all functions in $\mathcal{M}$ vanish at $0$, then there exists a $T^*_z$-invariant subspace $\mathcal{N}\subset H^2_{\mathbb{R}}(\mathbb{D})$ such that
$$\mathcal{M}=\Big\{f=zh_1e : h_1\in \mathcal{N}\Big\}.$$
 Further, for all $f \in \mathcal{M}$,
\begin{eqnarray*}
    \|f\|=\|h_1\|.
\end{eqnarray*}
\end{itemize}
\end{theorem}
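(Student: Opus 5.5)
We follow the strategy of Theorem \ref{ThM1}: complexify, apply the complex result (Theorem \ref{CGP}), and then use the symmetry $F\mapsto\widehat F$ together with the symmetrisation $\phi$ to return to the real setting. Let $\mathcal{F}=\mathrm{span}_{\mathbb{R}}\{e\}$ be the defect space and put $\mathcal{M}_\mathbb{C}=\mathcal{M}+i\mathcal{M}$ and $\mathcal{F}_\mathbb{C}=\mathrm{span}_{\mathbb{C}}\{e\}$. Since $e\in H^2_\mathbb{R}(\mathbb{D})$ is orthogonal to $\mathcal{M}$, we have $e\perp \mathcal{M}_\mathbb{C}$ over $\mathbb{C}$. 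Moreover $\mathcal{M}_\mathbb{C}\cap zH^2(\mathbb{D})=(\mathcal{M}\cap zH^2_\mathbb{R}(\mathbb{D}))+i(\mathcal{M}\cap zH^2_\mathbb{R}(\mathbb{D}))$, because $f+ih$ vanishes at $0$ exactly when the real functions $f$ and $h$ both do. Together with the $\mathbb{C}$-linearity of $T^*_z$, this gives $T^*_z(\mathcal{M}_\mathbb{C}\cap zH^2(\mathbb{D}))\subset \mathcal{M}_\mathbb{C}\oplus\mathcal{F}_\mathbb{C}$.

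The defect of $\mathcal{M}_\mathbb{C}$ is therefore at most one. It is not zero: otherwise $\mathcal{M}_\mathbb{C}$ would be nearly invariant, and taking real parts via $\phi$ (which commutes with $T^*_z$ and fixes $\mathcal{M}$) would make $\mathcal{M}$ nearly invariant, contradicting that the real defect is $1$. Hence Theorem \ref{CGP} applies with $n=1$ and $e_1=e$. This is where minimality of the defect has to be handled carefully; if needed, I would simply note that Theorem \ref{CGP}'s proof only uses the inclusion into $\mathcal{M}_\mathbb{C}\oplus\mathcal{F}_\mathbb{C}$, so the degenerate case causes no trouble.

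In case (i), Theorem \ref{CGP} gives a $T^*_z\oplus T^*_z$-invariant subspace $\mathcal{K}$ and the unit vector $g\in\mathcal{M}_\mathbb{C}\ominus(\mathcal{M}_\mathbb{C}\cap zH^2(\mathbb{D}))$ with $g(0)>0$. Exactly as in the proof of Theorem \ref{ThM1}, Lemma \ref{Lem1} and the one-dimensionality of that orthocomplement show that $\widehat g=g$. Thus $g\in\mathcal{M}$, $g$ is orthogonal to $\mathcal{M}\cap zH^2_\mathbb{R}(\mathbb{D})$, and it has unit norm.

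The key step is to show $\widehat{\mathcal{K}}=\mathcal{K}$ (componentwise hat). Take $(k,k_1)\in\mathcal{K}$ and $f=gk+zk_1e$. Because $g$, $e$ and $z$ are real, $\widehat f=g\widehat k+z\widehat{k_1}e$, and $\widehat f\in\mathcal{M}_\mathbb{C}$. The step I expect to be the main obstacle is uniqueness of the representing pair. I would argue that the map $(k,k_1)\mapsto gk+zk_1e$ is isometric from $\mathcal{K}$ onto $\mathcal{M}_\mathbb{C}$, with the pair given by a canonical recursive algorithm: $k(0)=\langle f,g\rangle/\langle g, g\rangle$ style coefficients, and the remainder decomposed via $T^*_z(f-k(0)g)=f_1+c\,e$. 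That algorithm commutes with the hat operation when $g$ and $e$ are real. Concretely, the coefficient $\langle f,g\rangle$ becomes its conjugate $\langle \widehat f,g\rangle$ by Lemma \ref{Lem1}, and likewise for $e$. Therefore the pair attached to $\widehat f$ is $(\widehat k,\widehat{k_1})$, which proves $\widehat{\mathcal{K}}\subset\mathcal{K}$.

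With that in hand, put $\mathcal{N}=\phi(\mathcal{K})\subset\mathcal{K}$. It is $T^*_z\oplus T^*_z$-invariant because $\phi$ commutes with $T^*_z$. It is closed by the isometry and the closedness of $\mathcal{M}$, repeating the argument of Theorem \ref{ThM1}. Applying $\phi$ to $\mathcal{M}_\mathbb{C}$ and using additivity of $\phi$ and the reality of $g$ and $e$ gives
$$\mathcal{M}=\phi(\mathcal{M}_\mathbb{C})=\{g\phi(k)+z\phi(k_1)e\}=\{gh+zh_1e:(h,h_1)\in\mathcal{N}\}.$$
The norm identity is inherited from Theorem \ref{CGP}, since $\mathcal{N}\subset\mathcal{K}$. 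Case (ii) is the same argument without $g$: the hat-symmetry of $\mathcal{K}$ follows from injectivity of $k_1\mapsto zk_1e$, since $\widehat{zk_1e}=z\widehat{k_1}e$. We then set $\mathcal{N}=\phi(\mathcal{K})$ and obtain $\|f\|=\|h_1\|$.
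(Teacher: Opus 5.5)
Your proposal is correct and follows essentially the same route as the paper: complexify, apply Theorem~\ref{CGP} with $n=1$, show $\widehat g=g$ via Lemma~\ref{Lem1} and one-dimensionality, prove $\widehat{\mathcal K}\subset\mathcal K$, and take $\mathcal N=\phi(\mathcal K)$. You also supply justification that the paper only asserts. This is your canonical-recursion argument for why $\widehat{gk+zk_1e}\in\mathcal M_{\mathbb C}$ forces $(\widehat k,\widehat{k_1})\in\mathcal K$, together with your check that $\mathcal M_{\mathbb C}$ has defect exactly one with defect space $\mathbb C e$. The first point is genuinely needed, since $(k,k_1)\mapsto gk+zk_1e$ is injective on $\mathcal K$ but not on all of $H^2(\mathbb D)\oplus H^2(\mathbb D)$.
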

\begin{proof}
Consider the complexification of $\mathcal{M}$, 
$$\mathcal{M}_\mathbb{C}:=\mathcal{M}+i\mathcal{M}=\{f+ih : f,h \in \mathcal{M}\}.$$
Then we observe that $\mathcal{M}_{\mathbb{C}}$ is a nearly $T^*_z$-invariant subspace of $H^2(\mathbb{D})$ with defect 1, and the defect space is spanned by the unit vector $e$. 

  Now, first assume that not all functions in $\mathcal{M}$ vanish at $0$. This implies that not all functions in $\mathcal{M}_{\mathbb{C}}$ vanish at $0$. Therefore, by Theorem \ref{CGP} for the case $n=1$, there exists a $T^*_z\oplus T^*_z$-invariant subspace $\mathcal{K}\subset H^2(\mathbb{D})\oplus H^2(\mathbb{D})$ such that
  $$\mathcal{M}_{\mathbb{C}}=\Big\{l=gk + zk_1e : (k,k_1)\in \mathcal{K}\Big\},$$
  where $g$ is the function of unit norm in $\mathcal{M}_\mathbb{C}$, orthogonal to $\mathcal{M}_\mathbb{C}\cap zH^2(\mathbb{D})$, satisfying $g(0)>0$. Since, $\mathcal{M}_\mathbb{C}=\widehat{\mathcal{M}_\mathbb{C}}$ and $g \in \mathcal{M}_\mathbb{C}$, therefore $\widehat{g} \in \mathcal{M}_\mathbb{C}$. Also, for any $l=zt \in \mathcal{M}_\mathbb{C}\cap zH^2(\mathbb{D})$,
 \begin{eqnarray*}
 \langle \widehat{g},l \rangle =\langle \widehat{g},zt \rangle = \langle z\widehat{t}, g \rangle = 0. &&\text{(By Lemma \ref{Lem1})}   
 \end{eqnarray*}
 This shows that $\widehat{g}$ is orthogonal to  $\mathcal{M}_\mathbb{C}\cap zH^2(\mathbb{D})$. Now, since $\mathcal{M}_{\mathbb{C}}$ is not contained in $zH^2(\mathbb{D})$, therefore $\mathcal{M}_\mathbb{C}\ominus (\mathcal{M}_\mathbb{C}\cap zH^2(\mathbb{D})) $ is one dimensional. Therefore $\widehat{g}=\alpha g$, for some $\alpha \in \mathbb{C}$. Since $g(0)>0$, thus $\widehat{g}(0)=g(0)$ and hence $\alpha=1$. This concludes that $g=\widehat{g}$. Thus, $g\in \mathcal{M} \ominus (\mathcal{M}\cap zH^2_{\mathbb{R}}(\mathbb{D}))$. 
Thus we have 
\begin{eqnarray*}
    \mathcal{M} &=& \phi(\mathcal{M}_\mathbb{C})\\
    &=&\{\phi(l): l \in \mathcal{M}_\mathbb{C} \}\\
    &=& \Big\{f=g\phi(k) + z\phi(k_1)e : (k,k_1) \in \mathcal{K}\Big\}\\
    &=& \Big\{f=gh + zh_1e : h=\phi(k), h_1=\phi(k_1)\Big\}.
\end{eqnarray*}
Now consider the set $\mathcal{N}\subset H^2_{\mathbb{R}}(\mathbb{D}) \oplus H^2_{\mathbb{R}}(\mathbb{D})$ defined by
\begin{eqnarray*}
    \mathcal{N}:=\{(h,h_1): f=gh+zh_1e \in \mathcal{M}\}.
\end{eqnarray*}
This shows that
$\mathcal{N}=\phi(\mathcal{K})$. Thus, for any $(k, k_1) \in \mathcal{K}$, $$l=gk + zk_1e  \in \mathcal{M}_{\mathbb{C}},$$
and since $\mathcal{M}_{\mathbb{C}}=\widehat{\mathcal{M}}_{\mathbb{C}}$, therefore $$\widehat{l}=g\widehat{k} + z\widehat{k}_1e  \in \mathcal{M}_{\mathbb{C}},$$
which implies $(\widehat{k},\widehat{k}_1)\in \mathcal{K}$.
This shows that $\widehat{\mathcal{K}}\subset \mathcal{K}$. Therefore, $$\mathcal{N}=\phi(\mathcal{K})\subset \mathcal{K}.$$ This leads us to have 
\begin{eqnarray*}
    \|f\|^2 &=& \|gh + h_1\|^2\\
    &=& \|h\|^2 + \|h_1\|^2. \quad \text{(by Theorem \ref{CGP})}\\
\end{eqnarray*}
Thus we conclude that any $f\in \mathcal{M}$ can be expressed as $$ f = gh + zh_1e,$$
together with norm equality
\begin{eqnarray*}
    \|f\|^2=\|h\|^2+\|h_1\|^2.
\end{eqnarray*}
Next, we show that $\mathcal{N}$ is a closed subspace of $H^2_{\mathbb{R}}(\mathbb{D}) \oplus H^2_{\mathbb{R}}(\mathbb{D})$. Since $\mathcal{K}$ is a vector subspace of $H^2(\mathbb{D}) \oplus H^2(\mathbb{D})$ and $\mathcal{N}=\phi(\mathcal{K})$, therefore it is easy to see that $\mathcal{N}$ is a vector subspace of $H^2_{\mathbb{R}}(\mathbb{D}) \oplus H^2_{\mathbb{R}}(\mathbb{D})$.
We show that $\mathcal{N}$ is closed. Take a Cauchy sequence $\{(h^n,h^n_1)\}_{n=1}^{\infty}$ in $\mathcal{N}$. Then $$f_n=gh^n + zh^n_1e \in \mathcal{M}$$
and   $$\|f_n\|^2=\|h^n\|^2 + \|h^n_1\|^2,$$ which is equal to $\|(h^n,h^n_1)\|^2$. This shows that the sequence $\{f_n\}$ is Cauchy and thus it converges to a point $f \in \mathcal{M}$. Therefore there exists $(h,h_1) \in \mathcal{K}$ such that $f=gh + zh_1e$. Thus 

\begin{eqnarray*}
\|(h^n,h^n_1)-(h,h_1)\|^2
&=&\|(h^n -h, h^n_1-h_1)\|^2\\
&=&\|(h^n-h)\|^2 + \|(h^n_1-h_1)\|^2\\
&=&\|f_n-f\|^2 \longrightarrow 0 \text{ as } n \longrightarrow \infty.
\end{eqnarray*}
This shows that $\mathcal{N}$ is closed in $H^2_{\mathbb{R}}
(\mathbb{D}) \oplus H_\mathbb{R}^2(\mathbb{D})$.

Lastly, we show that $\mathcal{N}$ is invariant under $T^*_z\oplus T^*_z$. Suppose $(h,h_1)\in \mathcal{N}$, then there exists $(k, k_1)\in \mathcal{K}$ such that $h=\phi(k), h_1=\phi(k_1)$. Thus 
\begin{eqnarray*}
    (T^*_z\oplus T^*_z) (h,h_1) &=& (T^*_z\oplus T^*_z)(\phi(k),\phi(k_1))\\
    &=& (T^*_z\phi(k),T^*_z\phi(k_1))\\
    &=& \bigg(T^*_z\bigg(\frac{k+\widehat{k}}{2}\bigg), T^*_z\bigg(\frac{k_1+\widehat{k_1}}{2}\bigg) \bigg)\\
    &=& \bigg(\frac{T^*_zk+T^*_z\widehat{k}}{2}, \frac{T^*_zk_1+T^*_z\widehat{k_1}}{2}\bigg) \\
    &=& \bigg(\frac{T^*_zk+\widehat{T^*_zk}}{2}, \frac{T^*_zk_1+\widehat{T^*_zk_1}}{2}\bigg) \quad \text{( since $T^*_z\widehat{k}=\widehat{T^*_zk}$, $T^*_z\widehat{k}_1=\widehat{T^*_zk_1}$)}\\
    &=& (\phi(T^*_zk), \phi(T^*_zk_1))\\
    &\in &\phi(\mathcal{K})=\mathcal{N},
\end{eqnarray*}
which is what we required.

 Further, for the other case, when all functions in $\mathcal{M}$ vanish at $0$, then this shows that all the functions in $\mathcal{M}_{\mathbb{C}}$ vanish at $0$. Therefore, by following similar techniques to those above, we get that 
 
 $$\mathcal{M}=\Big\{f=zh_1e : h_1\in \mathcal{N}\Big\},$$
where the subspace
\begin{eqnarray*}
    \mathcal{N}=\Big\{h_1: f=zh_1e\in \mathcal{M}\Big\}
\end{eqnarray*}
is $T^*_z$-invariant in $H^2_{\mathbb{R}}(\mathbb{D})$. Further, for all $f=zh_1e \in \mathcal{M}$,
\begin{eqnarray*}
    \|f\|=\|h_1\|.
\end{eqnarray*}
This completes the proof.
\end{proof}

\textnormal{Using arguments similar to those employed in the proof of Theorem \ref{MT2}, we can extend Theorem \ref{MT2} to the case of any finite defect. For the sake of completeness, we state the result below.}

\begin{theorem}\label{MT3}
Suppose $\mathcal{M}$ is a non-zero subspace of $H^2_{\mathbb{R}}(\mathbb{D})$ which is nearly $T^*_z$-invariant with defect $n$, and let $\{e_i\}_{i=1}^{n}$ be an orthonormal basis of the defect space. 
\begin{itemize}
    \item[(i)] If there exists a function in $\mathcal{M}$ which does not vanish at $0$, then there exists a $T^*_z\oplus T^*_z$-invariant subspace $\mathcal{N}$ of $H^2_{\mathbb{R}}(\mathbb{D}) \oplus H^2_{\mathbb{R}}(\mathbb{D},\mathbb{C}^n)$ such that
\begin{eqnarray*}
    \mathcal{M}=\Bigg\{f=gh + z\sum_{i=1}^{n}h_ie_i: (h, h_1, \dots, h_n) \in \mathcal{N}\Bigg\},
\end{eqnarray*}
where $g$ is the function of unit norm in $\mathcal{M}$, orthogonal to $\mathcal{M}\cap zH^2_{\mathbb{R}}(\mathbb{D})$, satisfying $g(0)>0$.
Further, for all $f \in \mathcal{M}$,
\begin{eqnarray*}
\|f\|^2=\|h\|^2 + \sum_{i=1}^{n}\|h_i\|^2.
\end{eqnarray*}
\item[(ii)] If all functions in $\mathcal{M}$ vanish at $0$, then there exists a $T^*_z$-invariant subspace $\mathcal{N}$ of $H^2_{\mathbb{R}}(\mathbb{D},\mathbb{C}^n)$ such that
$$\mathcal{M}=\Bigg\{f=z\sum_{i=1}^{n}h_ie_i :(h_1, \dots, h_n) \in \mathcal{N}\Bigg\}.$$
Further, for all $f\in \mathcal{M},$
\begin{eqnarray*}
    \|f\|^2=\sum_{i=1}^{n}\|h_i\|^2.
\end{eqnarray*}
\end{itemize}
\end{theorem}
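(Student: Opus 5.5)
We follow the same complexification scheme as in the proofs of Theorem \ref{ThM1} and Theorem \ref{MT2}, with Theorem \ref{CGP} used for general $n$.

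\textbf{Step 1: pass to the complexification.} Put $\mathcal{M}_\mathbb{C}=\mathcal{M}+i\mathcal{M}$ and $\mathcal{F}=\mathrm{span}_\mathbb{R}\{e_1,\dots,e_n\}$. Since $\mathcal{F}\perp\mathcal{M}$ in $H^2_\mathbb{R}(\mathbb{D})$, and the complex inner product of real-coefficient functions is real, we get $\mathcal{F}_\mathbb{C}\perp\mathcal{M}_\mathbb{C}$. Also $T^*_z$ commutes with taking real and imaginary parts, i.e. with $\phi$. Hence
\[
T^*_z(\mathcal{M}_\mathbb{C}\cap zH^2(\mathbb{D}))\subset \mathcal{M}_\mathbb{C}\oplus\mathcal{F}_\mathbb{C}.
\]
The defect of $\mathcal{M}_\mathbb{C}$ is then at most $n$. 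I will argue it is exactly $n$: a complex defect space of dimension $m<n$ can be replaced by its symmetrization, giving a real subspace of dimension at most $m$ that still works for $\mathcal{M}$. This contradicts minimality. Pinning this down (for instance via the projection onto $(\mathcal{M}_\mathbb{C})^\perp$ of $T^*_z(\mathcal{M}_\mathbb{C}\cap zH^2)$, which is $\widehat{\ }$-stable) is the main technical obstacle. Once it is settled, $\{e_i\}$ is a complex orthonormal basis of the defect space of $\mathcal{M}_\mathbb{C}$, so Theorem \ref{CGP} applies with the same $e_i$.

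\textbf{Step 2: case (i).} Theorem \ref{CGP}(i) gives a $T^*_z\oplus T^*_z$-invariant $\mathcal{K}\subset H^2(\mathbb{D})\oplus H^2(\mathbb{D},\mathbb{C}^n)$ with
\[
\mathcal{M}_\mathbb{C}=\Big\{gk+z\sum_i k_ie_i:(k,k_1,\dots,k_n)\in\mathcal{K}\Big\},
\]
together with the norm identity. Exactly as in Theorem \ref{MT2}, I use $\widehat{\mathcal{M}_\mathbb{C}}=\mathcal{M}_\mathbb{C}$, Lemma \ref{Lem1}, and the one-dimensionality of $\mathcal{M}_\mathbb{C}\ominus(\mathcal{M}_\mathbb{C}\cap zH^2)$. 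These show $\widehat g=g$, so $g$ is the prescribed function in $\mathcal{M}$.

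Since $\widehat{e_i}=e_i$, we have $\widehat{l}=g\widehat{k}+z\sum\widehat{k_i}e_i$. The representation is unique because of the isometric norm identity. Hence $\widehat{\mathcal{K}}\subset\mathcal{K}$, and therefore $\phi(\mathcal{K})\subset\mathcal{K}$.

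Set $\mathcal{N}=\phi(\mathcal{K})$. Because $\phi$ is additive and real-linear on real scalars, $\mathcal{M}=\phi(\mathcal{M}_\mathbb{C})$ has the stated form. The norm identity is inherited from Theorem \ref{CGP} because $\mathcal{N}\subset\mathcal{K}$.

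The remaining properties go as before. Closedness of $\mathcal{N}$ follows from the isometry $(h,h_1,\dots,h_n)\mapsto f$ and the closedness of $\mathcal{M}$, as in Theorem \ref{MT2}. Invariance under $T^*_z\oplus T^*_z$ follows from $T^*_z\phi=\phi T^*_z$ componentwise.

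\textbf{Step 3: case (ii).} If every $f\in\mathcal{M}$ vanishes at $0$, then so does every function in $\mathcal{M}_\mathbb{C}$. Theorem \ref{CGP}(ii) then gives $\mathcal{K}\subset H^2(\mathbb{D},\mathbb{C}^n)$. The same argument, with $\widehat{\mathcal{K}}\subset\mathcal{K}$ and $\mathcal{N}=\phi(\mathcal{K})$, yields a closed $T^*_z$-invariant $\mathcal{N}\subset H^2_\mathbb{R}(\mathbb{D},\mathbb{C}^n)$, the stated description of $\mathcal{M}$, and the isometry $\|f\|^2=\sum\|h_i\|^2$.
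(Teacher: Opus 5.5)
Your route is the paper's: complexify, apply Theorem \ref{CGP} to $\mathcal{M}_\mathbb{C}$, show $\widehat g=g$, and set $\mathcal{N}=\phi(\mathcal{K})$. However, two steps are not actually established in your write-up. The paper's proof of Theorem \ref{MT2}, which it says extends to general $n$, simply asserts both, so you have to supply them.

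\textbf{Step 1 (the defect of $\mathcal{M}_\mathbb{C}$).} You leave this open, and the symmetrization argument you sketch first does not work as stated. $\phi$ of a complex $m$-dimensional subspace can have real dimension $2m$; for example $\phi(\mathbb{C}(1+iz))=\mathrm{span}_\mathbb{R}\{1,z\}$. So no contradiction with minimality follows unless you already know the complex defect space is $\widehat{\ }$-stable. Your parenthetical projection idea settles it in a few lines. Let $Q$ be the orthogonal projection of $H^2(\mathbb{D})$ onto $H^2(\mathbb{D})\ominus\mathcal{M}_\mathbb{C}$, and $P$ that of $H^2_\mathbb{R}(\mathbb{D})$ onto $H^2_\mathbb{R}(\mathbb{D})\ominus\mathcal{M}$. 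A finite-dimensional $\mathcal{G}\perp\mathcal{M}_\mathbb{C}$ satisfies $T^*_z(\mathcal{M}_\mathbb{C}\cap zH^2(\mathbb{D}))\subset\mathcal{M}_\mathbb{C}\oplus\mathcal{G}$ if and only if $\mathcal{G}\supset QT^*_z(\mathcal{M}_\mathbb{C}\cap zH^2(\mathbb{D}))$. Hence the defect space is unique and equals this space. Likewise the real defect space is $PT^*_z(\mathcal{M}\cap zH^2_\mathbb{R}(\mathbb{D}))=\mathrm{span}_\mathbb{R}\{e_i\}$. Now $\mathcal{M}_\mathbb{C}\cap zH^2(\mathbb{D})=(\mathcal{M}\cap zH^2_\mathbb{R}(\mathbb{D}))+i(\mathcal{M}\cap zH^2_\mathbb{R}(\mathbb{D}))$, because $f(0),h(0)\in\mathbb{R}$. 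Also $Q$ is complex linear with $Q=P$ on $H^2_\mathbb{R}(\mathbb{D})$. So the complex defect space is $\mathrm{span}_\mathbb{C}\{e_i\}$, of dimension $n$, with orthonormal basis $\{e_i\}$.

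\textbf{Step 2 ($\widehat{\mathcal{K}}\subset\mathcal{K}$).} This does not follow from ``the representation is unique because of the isometric norm identity.'' Write $U(k,k_1,\dots,k_n)=gk+z\sum_i k_ie_i$. The isometry gives injectivity of $U$ only on $\mathcal{K}$. But you must compare $(\widehat k,\widehat{k_1},\dots,\widehat{k_n})$, which is not yet known to lie in $\mathcal{K}$, with the $\mathcal{K}$-representative of $\widehat l$. And $U$ is not injective on $H^2(\mathbb{D})\oplus H^2(\mathbb{D},\mathbb{C}^n)$; for instance $U(ze_1,-g,0,\dots,0)=0$.

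What works is that the representative is computed from $f$ recursively. If $x=(k,k_1,\dots,k_n)\in\mathcal{K}$ and $U(x)=f$, then $k(0)=f(0)/g(0)$ and
\[
T^*_z\bigl(f-k(0)g\bigr)=U\bigl((T^*_z\oplus T^*_z)x\bigr)+\sum_{i=1}^n k_i(0)e_i ,
\]
where the first term lies in $\mathcal{M}_\mathbb{C}$ by invariance of $\mathcal{K}$. Hence $k_i(0)=\langle T^*_z(f-k(0)g),e_i\rangle$, and $(T^*_z\oplus T^*_z)x$ represents $(I-Q)T^*_z(f-k(0)g)$. By induction, every Taylor coefficient of $x$ is obtained from $f$ by operations that commute with $\widehat{\ }$. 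This uses $g(0)\in\mathbb{R}$, $\widehat g=g$, $\widehat{e_i}=e_i$, Lemma \ref{Lem1}, and the fact that $Q$ commutes with $\widehat{\ }$ because $\mathcal{M}_\mathbb{C}$ is $\widehat{\ }$-stable. So the representative of $\widehat f$ is $\widehat x$. Case (ii) is identical with $k$ absent.

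With these two points in place, the rest of your argument goes through as in the paper: the form of $\mathcal{M}$, the norm identity, and invariance. Closedness is now immediate from $\mathcal{N}=\mathcal{K}\cap(H^2_\mathbb{R}(\mathbb{D})\oplus H^2_\mathbb{R}(\mathbb{D},\mathbb{C}^n))$.
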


\textnormal{Now we shall describe almost invariant subspaces of the backward shift operator on the real Hardy space:}

\begin{definition}\label{defA}
 A subspace $\mathcal{M}$ of $H^2_{\mathbb{R}}(\mathbb{D})$ is called almost invariant under the backward shift operator $T^*_z$ if there exists a finite dimensional subspace $\mathcal{F}$ (orthogonal to $\mathcal{M}$) of $H^2_{\mathbb{R}}(\mathbb{D})$ such that 
 \begin{eqnarray}\label{eA}
  T^*_z\mathcal{M} \subset \mathcal{M} \oplus \mathcal{F}.  
 \end{eqnarray}
A finite dimensional subspace $\mathcal{F}$ of the smallest dimension that satisfies (\ref{eA}) is called the \textit{defect space} of $\mathcal{M}$, and its dimension is called the \textit{defect} of $\mathcal{M}$. 
\end{definition}

\textnormal{Interestingly, almost $T^*_z$-invariant subspaces are linked with nearly $T^*_z$-invariant subspaces with finite defect in a natural way.
In the following corollary, we utilize Theorem \ref{MT3} to describe almost $T^*_z$-invariant subspaces in the real Hardy space $H^2_{\mathbb{R}}(\mathbb{D})$:}
\begin{corollary}\label{CorA}
A subspace $\mathcal{M}$ of $H^2_{\mathbb{R}}(\mathbb{D})$ is almost invariant under the backward shift operator $T^*_z$ with finite defect $n$ if and only if $\mathcal{M}$ is of the form described in either case (i) or case (ii) of Theorem \ref{MT3}, with an additional condition that $T^*_zg \in \mathcal{M} \oplus \mathcal{F}$ ($\mathcal{F}$ is the defect space) in case $(i)$, while case $(ii)$ is unchanged.  
\end{corollary}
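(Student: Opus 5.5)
The plan is to prove the two implications separately. The key structural fact is that almost invariance with defect $n$ is nearly invariance with defect at most $n$, plus control of $T^*_z$ on the one extra direction $g$.

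\emph{Forward direction.} Suppose $\mathcal{M}$ is almost $T^*_z$-invariant with defect space $\mathcal{F}$, $\dim\mathcal{F}=n$. Since $\mathcal{M}\cap zH^2_{\mathbb{R}}(\mathbb{D})\subset\mathcal{M}$, (\ref{eA}) immediately gives (\ref{eD}) with the same $\mathcal{F}$. Hence $\mathcal{M}$ is nearly $T^*_z$-invariant with finite defect, and Theorem \ref{MT3} applies. One subtlety is that the near-defect might be strictly smaller than $n$. I would handle this by noting that Theorem \ref{MT3} (and its proof via Theorem \ref{CGP}) works with any orthonormal basis of any finite-dimensional $\mathcal{F}\perp\mathcal{M}$ satisfying (\ref{eD}). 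So we may take $\{e_i\}$ to be a basis of the almost-defect space $\mathcal{F}$ itself, allowing some $h_i$ to vanish identically.

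In case (i), $g\in\mathcal{M}$, so $T^*_zg\in\mathcal{M}\oplus\mathcal{F}$ directly by (\ref{eA}). In case (ii), nothing extra is needed.

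\emph{Converse in case (ii).} Every $f\in\mathcal{M}$ vanishes at $0$, so $T^*_z\mathcal{M}=T^*_z(\mathcal{M}\cap zH^2_{\mathbb{R}}(\mathbb{D}))$. The inclusion (\ref{eA}) then coincides with (\ref{eD}). To make this self-contained, I would verify it from the representation: for $f=z\sum h_ie_i$ we have $T^*_zf=\sum h_ie_i$. Writing $h_i=h_i(0)+zT^*_zh_i$ and using the $T^*_z$-invariance of $\mathcal{N}$ gives
\[T^*_zf=z\sum (T^*_zh_i)e_i+\sum h_i(0)e_i\in\mathcal{M}+\mathcal{F}.\]

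\emph{Converse in case (i).} Take $f=gh+z\sum h_ie_i$ with $(h,h_1,\dots,h_n)\in\mathcal{N}$. Write $h=h(0)+zT^*_zh$ and $h_i=h_i(0)+zT^*_zh_i$. Then
\[f=h(0)g+z\Big(g\,T^*_zh+z\sum (T^*_zh_i)e_i\Big)+z\sum h_i(0)e_i.\]
Since $\mathcal{N}$ is $T^*_z\oplus T^*_z$-invariant, the function $f':=g\,T^*_zh+z\sum (T^*_zh_i)e_i$ lies in $\mathcal{M}$. Applying $T^*_z$ to $f$ gives
\[T^*_zf=h(0)\,T^*_zg+f'+\sum h_i(0)e_i.\]
The term $T^*_zg$ lies in $\mathcal{M}\oplus\mathcal{F}$ by hypothesis, $f'\in\mathcal{M}$, and $\sum h_i(0)e_i\in\mathcal{F}$. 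Hence $T^*_z\mathcal{M}\subset\mathcal{M}\oplus\mathcal{F}$, so $\mathcal{M}$ is almost invariant with defect at most $\dim\mathcal{F}$.

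\emph{Main obstacle.} The delicate point is matching the defect exactly to $n$ rather than only bounding it. The forward direction is fine once Theorem \ref{MT3} is read with $\mathcal{F}$ as the given space. For the converse, I would interpret ``defect $n$'' as the dimension of the minimal $\mathcal{F}$ realizing the representation. Minimality then transfers because any smaller $\mathcal{F}'$ satisfying (\ref{eA}) would also satisfy (\ref{eD}).
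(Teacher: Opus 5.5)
Your proposal is correct and follows the paper's proof. The forward direction is the same observation that (\ref{eA}) implies (\ref{eD}), together with $g\in\mathcal{M}$. The converse uses the same splitting $f=h(0)g+(f-h(0)g)$; the one difference is that you compute $T^*_z(f-h(0)g)=f'+\sum h_i(0)e_i$ from the $T^*_z\oplus T^*_z$-invariance of $\mathcal{N}$, while the paper gets $T^*_z(f-h(0)g)\in\mathcal{M}\oplus\mathcal{F}$ directly from near invariance with defect.

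Your extra care about the defect is warranted and goes beyond the paper, which simply asserts that almost-defect $n$ gives near-defect $n$. That assertion is false in general: $\mathcal{M}=\mathbb{R}(1+z)$ has near-defect $0$ but almost-defect $1$. So the equivalence holds exactly when you allow a non-minimal $\mathcal{F}$ in Theorem \ref{MT3} and measure minimality over spaces satisfying both the representation and the $T^*_zg$ condition. The non-minimal $\mathcal{F}$ can be obtained, for example, by padding a basis of the minimal near-defect space $P_{\mathcal{M}^\perp}T^*_z(\mathcal{M}\cap zH^2_{\mathbb{R}}(\mathbb{D}))\subset\mathcal{F}$ with zero components.
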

\begin{proof}
 Suppose $\mathcal{M}$ is almost invariant under $T^*_z$ with defect $n$. Then $\mathcal{M}$ is nearly $T^*_z$-invariant with defect $n$. Thus, $\mathcal{M}$ satisfies the hypothesis of Theorem \ref{MT3}. Further, since $g \in \mathcal{M}$, therefore $T^*_zg \in \mathcal{M} \oplus \mathcal{F}$ also holds.

  Conversely, suppose $\mathcal{M}$ satisfies the hypothesis of Theorem \ref{MT3} together with $T^*_zg \in \mathcal{M} \oplus \mathcal{F}$. Then for any $f \in \mathcal{M}$
\begin{eqnarray*}
     f=gh + z\sum_{i=1}^{n} h_i e_i=gh(0) + g(h - h(0)) + z\sum_{i=1}^{n} h_i e_i.
\end{eqnarray*}
Set $l(z)=g(h - h(0)) + z\sum_{i=1}^{n} h_i e_i$. Then we observe that $l \in \mathcal{M}$ and since $l(0)=0$, therefore $T^*_zl \in \mathcal{M} \oplus \mathcal{F}$. This, along with the fact that $T^*_zg \in \mathcal{M} \oplus \mathcal{F}$ concludes that $T^*_zf \in\mathcal{M} \oplus \mathcal{F}$. This completes the proof.

\end{proof}

\subsection*{Acknowledgements}
We thank the Mathematical Sciences Foundation, Delhi, for the support and facilities needed to complete the present work. The first author thanks the University Grants Commission(UGC), India, for the support, and the second author thanks Shiv Nadar Institution of Eminence for partially supporting this research.

\subsection*{Statements and Declarations} 

\vspace{.2 cm}

\noindent {\bf Competing Interests}  The authors have no competing interests to declare that are relevant to the content of this article.  

\vspace{.2 cm}

\noindent {\bf Data availability statements} Data sharing is not applicable to this article as no new data were created or analyzed in this study.

\bibliography{refs.bib}

\end{document}